\newtheorem{theorem}{Theorem}
\newtheorem{corollary}[theorem]{Corollary}
\newtheorem{proposition}[theorem]{Proposition}
\newcommand{\ud}{\mathrm{d}}
\begin{document}

\title[Analysis of a Clonal Evolution Model]{Mathematical Analysis of a Clonal Evolution Model of Tumour Cell Proliferation}
\author[J. Z. Farkas]{J\'{o}zsef Z. Farkas}

\address{Division of Computing Science and Mathematics \\ University of Stirling \\ Stirling, FK9 4LA, United Kingdom }

\email{jozsef.farkas@stir.ac.uk}

\thanks{This work was completed with the support of the Royal Society.}
\author[G. F. Webb]{Glenn F. Webb}

\address{Department of Mathematics \\ Vanderbilt University \\ 1326 Stevenson Center \\
Nashville, TN 37240-0001, USA}
\email{glenn.f.webb@vanderbilt.edu}

\subjclass{35Q92, 35B35, 92C37}
\keywords{Cancer modelling, structured populations, semigroups of operators, asymptotic behaviour, telomere self-renewal}
\date{\today}

\dedicatory{Dedicated to Professor Jan Pr\"{u}{\ss} on the occasion of his 65th birthday.}

\begin{abstract}
We investigate a partial differential equation model of a cancer cell population, which is structured with respect to age and telomere length of cells. We assume a continuous telomere length structure, which is applicable to the clonal evolution model of cancer cell growth. This model has a non-standard non-local boundary condition. We establish global existence of solutions and study their qualitative behaviour. We study the effect of telomere restoration on cancer cell dynamics. Our results indicate that without telomere restoration, the cell population extinguishes. With telomere restoration, exponential growth occurs in the linear model. We further characterise the specific growth behaviour of the cell population for special cases. We also study the effects of crowding induced mortality on the qualitative behaviour, and  the existence and stability of steady states of a nonlinear model incorporating crowding effect. We present examples and extensive numerical simulations, which illustrate the rich dynamic behaviour of the linear and nonlinear models.
\end{abstract}

\maketitle
\section{Introduction}

Mathematical models of tumour growth provide insight into the dynamic cha\-rac\-teristics of tumour cell populations. Important issues are  cell proliferation, cell heterogeneity, and cell differentiation. These issues are currently examined in two hypothesized models of tumour evolution: the cancer stem cell  (CSC) model and the clonal evolution (CE) model\cite{GCW2009,N1976}. Both models have scientific support, as well as therapeutic implications, and in fact, both models may be involved in the development of a tumour. The essential distinction of the two models is the role of self-renewal in specific cells, and the fraction of the total cell populations that these cells comprise. Here self-renewal means the ability of a cell to inherit a specific function through an unlimited number of successive cell generations.

The CSC model hypothesizes that a very small sub-population of tumour stem cells generate the entire tumour cell population \cite{RMCW2001,Tan}. In mathematical treatments, these stem cells have the ability to self-renew indefinitely, and through sequential mutations generate all the heterogeneous and differentiated cell types comprising the tumour \cite{Alarcon,Ashkenazi,Deasy,Enderling,Ganguly,Gentry,Marciniak,Michor,Molina,Nakata,Piotrowska,Rodriguez,Stiehl,Tello, Vainstein,Wodarz}. This stem cell population lies at the apex of a hierarchical structure of cell types, and tumour evolution is dependent on their unconstrained self-renewing ability \cite{Johnston}.

The CE model  hypothesizes that a tumour population is composed of multiple genetically identical clones, which have the possibility of mutation, selection, and expansion \cite{Busse,RMCW2001,Stiehl2}. In the CE model all undifferentiated cells have a self-renewing capacity for contributing to the tumour evolution. These cells, however, are not at the apex of a hierarchal tree, but rather dispersed widely throughout the tumour cell population as a large fraction of the total tumour cell count.

A central element of cell self-renewal is the Hayflick limit, which constrains differentiated cell lines to a finite number of divisions \cite{H1965}. As differentiated cells divide, telomeres (nucleotide sequences at the ends of chromosomes) shorten until a critical limit is reached, and further divisions are prohibited \cite{LAFGH1992,SB}. The existence of a mechanism which reverses telomere shortening was predicted several decades ago. The CSC model hypothesizes that cancer stem cells circumvent telomere shortening by using  the enzyme telo\-me\-rase to replace their telo\-me\-res, and thus obtain the ability to divide indefinitely \cite{RMCW2001}. Recently, it has been shown that around 90\% of all types of human cancer exhibit a form of telomerase activation [27]. In the CSC model this property resides in an extremely small sub-population, from which all the differentiated tumour cells derive. In contrast, the CE model hypothesizes that a large number of undifferentiated cells, in clonal sub-populations, possess the ability to restore telomeres, and thus sustain the tumour evolution. These two models differ greatly in their fraction of self-renewing cells within the total population. Mathematical models provide a way to compare these self-renewal properties in proliferating cell populations.

In order to model the dynamics of self-renewing cells lineages, which cor\-res\-pond physiologically to chromosomal telomere lengths, is it necessary to track all cells through successive generations. Many mathematical treatments of telomere structure in cell population dynamics have been developed \cite{Arino,Arino2,Bourgeron,DSVBW2007,Kap}. The CSC model has been treated for example in \cite{Kap}, where telomere shortening is investigated in continuum differential equation models. The key focus of these treatments is that mother stem cells produce two daughter cells, one of which is a stem cell, and the other, a differentiated cell, with a limited number of future divisions. These models describe a finite number of sub-populations, each with an idealised precise telomere length. The self-renewal property of a stem cell is captured by one daughter cell remaining in the highest telomere class and the other daughter cell transiting to a lower telomere class. A descent of telomere length shortening continues in each class, with one daughter cell retaining the length of the mother and the other a lower length. This model yields a minute fraction of stem cells at the apex of the total population, consistent with the CSC literature \cite{RMCW2001}. 

The objective of this paper is to develop a mathematical analysis for the alternative CE model and to quantify its dynamic properties.  Our model here incorporates continuum, rather than discrete, telomere lengths in cells. This idealised continuum telomere length is assumed for convenience, to avoid unwieldy compartmentalisation with each telomere length subclass. In our  CE model of telomere shortening there is an unbalanced division of a mother cell to two daughter cells in terms of telomere length. The telomere length of a daughter cell may be less (corresponding to a differentiated cell), or may be equal or greater (corresponding to a self-renewing cell) than the mother cell. In this way the restoration of telomeres and the self-renewal property of cells is distributed through a large proportion of the tumour cell population, consistent with the CE literature\cite{RMCW2001}. The distribution of daughter cell telomere lengths is governed by a ma\-the\-ma\-tically formulated rule that assigns the telomere restoration property to cells which may be viewed as those capable of indefinite divisions in each of the diverse clonal sub-populations.

Our CE model belongs to the class of continuum structured population models, with age and time as dynamic variables, and telomere length as a population structure variable. In the past three decades physiologically structured population models have been increasingly utilised to shed light on some important phenomena of cell populations \cite{Arino,Arino2,Doumic,DSVBW2007,I,Kap,MD}. The power of structuring a population with respect to physiological variables is of great value in understanding the evolution of biological populations. There is  an increasing literature of physiologically structured population models with more than one (physiological) structuring variable. The development of a unified mathematical framework, where structuring variables play substantially different roles, promises to be extremely challenging from the theoretical point of view.

We first consider the following linear model for an age and telomere length structured proliferating cancer cell population.
\begin{align}
\frac{\partial p}{\partial t}(a,l,t)+\frac{\partial p}{\partial a}(a,l,t) & =-(\beta(a,l)+\mu(a,l))p(a,l,t),\quad a\in(0,a_m),\, l\in(0,l_m), \label{lin-eq1} \\
p(0,l,t) & =2\int_0^{l_m}r(l,\hat{l})\int_0^{a_m}\beta(a,\hat{l})p(a,\hat{l},t)\,\ud a\,\ud \hat{l}, \quad l\in (0,l_m), \label{line-eq2} \\
p(a,l,0)& =p_0(a,l),\quad a\in(0,a_m),\, l\in(0,l_m). \label{lin-eq3}
\end{align}
Above $p(a,l,t)$ stands for the density of cells of age $a$, having telomere length $l$ at time $t$. We assume a maximum cell age denoted by $a_m$ and a maximum telomere length denoted by $l_m$. The population count at time $t$ of cells with age between $a_1$ and $a_2$ and telomere length between $l_1$ and $l_2$ is
$$ \int_{a_1}^{a_2} \int_{l_1}^{l_2} p(a,l,t) \, \ud l  \, \ud a,$$
and the total population of all cells at time $t$ is 
$$P(t) =  \int_0^{a_m} \int_0^{l_m} p(a,l,t) \, \ud l  \, \ud a.$$
$\mu(a,l)$ quantifies the natural mortality of cells of age $a$ and telomere length $l$. A mother cell of age $a$ and telomere length $l$ divides into two daughter cells of age $0$ having (possibly) different telomere lengths, at a rate determined by the function $\beta(a,l)$. The function $r(l,\hat{l})$ describes the distribution of daughter telomere lengths $l$ from a mother cell of  telomere length $\hat{l}$. The boundary condition 
(\ref{line-eq2})  accounts for cell division of a mother cell into two daughter cells and requires that 
\begin{equation}\label{scale1eq}
\int_0^{l_m}\int_0^{l_m}r(l,\hat{l})\,\ud \hat{l}\,\ud l=1.
\end{equation}
From a probabilistic interpretation of $r(l,\hat{l})$ it would be natural to normalise the maximal telomere length $l_m$ to $1$. Throughout we retain $l_m$ as a general parameter.
We will impose further regularity assumptions on the model ingredients later on. Note that our model \eqref{lin-eq1}-\eqref{lin-eq3} can be considered as a continuous telomere length structured counterpart of the model recently introduced in \cite{Kap}.

\section{Existence of the governing linear semigroup}

Our starting model \eqref{lin-eq1}-\eqref{lin-eq3} is a linear one, moreover the telomere length $l$ only plays an important role in the somewhat unusual boundary condition \eqref{line-eq2}. Hence to establish the existence of the governing linear semigroup (and therefore the existence of mild solutions of the PDE \eqref{lin-eq1}-\eqref{lin-eq3}) we use a boundary perturbation result due to Greiner, see Theorem 2.3 in \cite{Gre}; see also \cite{Grabosch} and \cite{D-Sch} for similar ge\-ne\-ral results. It is very natural to apply the boundary perturbation result of Greiner, since the unperturbed generator (arising from equation \eqref{lin-eq1} with zero flux boun\-da\-ry condition) is readily shown to generate a translation semigroup. Moreover, it has the added advantage that the spectrally determined growth behaviour of the semigroup follows almost instantaneously, see Proposition 3.1 in \cite{Gre}. For basic definitions and results not introduced in the section we refer the reader to \cite{AGG,NAG}.

To apply the perturbation result of Greiner we set the framework as follows. Assume that $\beta\in C^1_+([0,a_m]\times [0,l_m])$ and $\mu, r\in L^\infty_+((0,a_m)\times (0,l_m))$, and that all of the model parameters are non-negative. In particular, it is natural to assume that $\beta$ does not vanish (in $a$) identically for any $l>0$. We also impose the natural assumption that cells reaching the maximal age do not reproduce anymore, i.e. $\beta(a_m,\cdot)\equiv 0$. This assumption is completely natural from the biological point of view. If cells would still divide upon reaching the maximum age then it would be natural to extend the age-interval beyond $a_m$. In other words, we have set the maximal age $a_m$  such that cells of this age do not divide for any more. On the other hand, we will see later when we introduce nonlinearities in model \eqref{lin-eq1}-\eqref{lin-eq3}, that non-reproducing cells still play a role in the population dynamics, in particular they may affect competition induced mortality. Also note that alternatively, we could have assumed that the mortality is locally integrable with respect to age, but $\int_0^{a_m}\mu(a,\circ)\,\ud a=\infty$ holds, i.e. no individuals survive the maximal age $a_m$. 

For the linear problem \eqref{lin-eq1}-\eqref{lin-eq3}, since we are dealing with density functions, the natural choice of state space is the following Lebesgue space. 
\begin{equation*}
\mathcal{X}=L^1((0,a_m)\times (0,l_m))\cong L^1\left((0,a_m);L^1(0,l_m)\right).
\end{equation*} 
Elements of $\mathcal{X}$ above can be understood as equivalence classes of measurable functions $f(\cdot)(\circ)$ on the square $(0,a_m)\times (0,l_m)$ such that $\int_0^{a_m}\left|f(a)(\circ)\right|\,\ud a\in L^1(0,l_m)$. 
We further set $\mathcal{Y}=L^1(0,l_m)$. We define the operators $\mathcal{A}$ and $\mathcal{B}$ as follows
\begin{align}\label{lin-generator}
\mathcal{A}\, p=-\frac{\partial p}{\partial a},\quad \mathcal{B}\, p=-(\mu+\beta)p,
\end{align}
with   
\begin{equation}\label{op-domain}
D(\mathcal{A})=\left\{p\in W^{1,1}\left((0,a_m);L^1(0,l_m)\right)\right\},\quad D(\mathcal{B})=\mathcal{X}.
\end{equation}
We further introduce the norm 
\begin{equation}
||v||_A=\int_0^{l_m}\int_0^{a_m} |v(a,l)|+|v_a(a,l)|\,\ud a\,\ud l.
\end{equation}
With the $||\cdot ||_A$ norm $D(\mathcal{A})$ is complete, and the maximal operator $\mathcal{A}\,:\,(D(\mathcal{A}),||\cdot||_A)\to\mathcal{X}$ is continuous and linear. Furthermore, we define 
\begin{equation*}
\mathcal{L}\,:\, (D(\mathcal{A}),||\cdot||_A)\to\mathcal{Y}, \quad \mathcal{L}\, v= v(0,\cdot).
\end{equation*} 
Then $\mathcal{L}$ is also continuous and linear, and we have Im$(\mathcal{L})=\mathcal{Y}$. We denote by $\mathcal{A}_0$ the restriction of $\mathcal{A}$ to Ker$(\mathcal{L})$. It is then clear that $\mathcal{A}_0$ generates the strongly continuous and nilpotent shift semigroup $\mathcal{S}$, explicitly given as
\begin{equation}\label{mod-sg}
(\mathcal{S}(t)\,u)(a,l)=\begin{Bmatrix}
u(a-t,l), \quad & a\ge t \\
0, \quad & a<t
\end{Bmatrix}.
\end{equation}
In particular note that for $t>a_m$ we have $(\mathcal{S}(t)\,u_0)\equiv 0$ for any initial condition $u_0\in\mathcal{X}_+$. 
We now define the bounded linear perturbing operator $\Phi\,:\,\mathcal{X}\to\mathcal{Y}$ as follows
\begin{equation}\label{boundary-op}
\Phi(u)=2\int_0^{l_m}r(\cdot,\hat{l})\int_0^{a_m}\beta(a,\hat{l})u(a,\hat{l})\,\ud a\,\ud \hat{l}.
\end{equation}
We also define the corresponding perturbed generator $\mathcal{A}_\Phi$ as 
\begin{equation}
\mathcal{A}_\Phi\, v =\mathcal{A}\, v,\quad D(\mathcal{A}_\Phi)=\{v\in D(\mathcal{A})\,|\, \mathcal{L}\, v=\Phi\, v\}.
\end{equation}
We recall the main result from \cite{Gre} for the reader's convenience.
\begin{theorem}[Greiner]\label{Greiner}
If $\Phi^*(\mathcal{Y}^*)\subseteq D(\mathcal{A}^*_0)$, then $\mathcal{A}_\Phi$ is the generator of a strongly continuous semigroup $\mathcal{T}_\Phi$ on $\mathcal{X}$. 
\end{theorem}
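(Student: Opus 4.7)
The plan is to establish generation of a $C_0$-semigroup by constructing the resolvent of $\mathcal{A}_\Phi$ from that of $\mathcal{A}_0$ via an abstract Dirichlet map, and then verifying Hille–Yosida bounds. Since $\mathcal{A}_0$ already generates the nilpotent shift semigroup $\mathcal{S}$ given by (\ref{mod-sg}), a natural approach is to treat the difference between $\mathcal{A}_\Phi$ and $\mathcal{A}_0$ as a perturbation of the boundary condition, not of the operator itself, and to absorb it by a rank-style correction at the level of resolvents.

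First I would fix $\lambda$ with $\mathrm{Re}(\lambda)$ sufficiently large and, since $\mathrm{Im}(\mathcal{L})=\mathcal{Y}$ and $\lambda-\mathcal{A}:D(\mathcal{A})\to\mathcal{X}$ has kernel complemented by $\mathrm{Ker}(\mathcal{L})$, introduce the abstract Dirichlet operator $D_\lambda:\mathcal{Y}\to D(\mathcal{A})$ as the unique right inverse of $\mathcal{L}|_{\mathrm{Ker}(\lambda-\mathcal{A})}$. In our explicit setting this is simply $(D_\lambda\phi)(a,l)=e^{-\lambda a}\phi(l)$. Any $v\in D(\mathcal{A}_\Phi)$ solving $(\lambda-\mathcal{A}_\Phi)v=f$ then decomposes as $v=R(\lambda,\mathcal{A}_0)f+D_\lambda\phi$ with $\phi=\mathcal{L}v$, and imposing $\mathcal{L}v=\Phi v$ yields the scalar (in $\mathcal{Y}$) equation
\begin{equation*}
(I-\Phi D_\lambda)\phi=\Phi R(\lambda,\mathcal{A}_0)f.
\end{equation*}
Hence everything reduces to inverting $I-\Phi D_\lambda$ on $\mathcal{Y}$ for sufficiently large $\lambda$, and the candidate resolvent is
\begin{equation*}
R(\lambda,\mathcal{A}_\Phi)=R(\lambda,\mathcal{A}_0)+D_\lambda(I-\Phi D_\lambda)^{-1}\Phi R(\lambda,\mathcal{A}_0).
\end{equation*}

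The main obstacle — and the reason the dual hypothesis enters — is controlling $\|\Phi D_\lambda\|_{\mathcal{Y}\to\mathcal{Y}}$ as $\mathrm{Re}(\lambda)\to\infty$. In general $\Phi$ is only bounded from $\mathcal{X}$ to $\mathcal{Y}$, and $D_\lambda$ need not gain any decay in $\lambda$ without extra regularity. The assumption $\Phi^*(\mathcal{Y}^*)\subseteq D(\mathcal{A}_0^*)$ is exactly what is needed: passing to adjoints, $\langle\Phi D_\lambda\phi,\psi\rangle=\langle D_\lambda\phi,\Phi^*\psi\rangle$, and since $\Phi^*\psi\in D(\mathcal{A}_0^*)$ one can rewrite $D_\lambda^*\Phi^*\psi$ in terms of $R(\lambda,\mathcal{A}_0^*)\mathcal{A}_0^*\Phi^*\psi$, producing the factor $1/\mathrm{Re}(\lambda)$ needed for decay. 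The uniform bound $\|\Phi D_\lambda\|\le C/\mathrm{Re}(\lambda)$ then guarantees that $I-\Phi D_\lambda$ is invertible via Neumann series for $\mathrm{Re}(\lambda)$ large.

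Finally, one verifies that the candidate resolvent actually maps $\mathcal{X}$ into $D(\mathcal{A}_\Phi)$ and satisfies $(\lambda-\mathcal{A}_\Phi)R(\lambda,\mathcal{A}_\Phi)=I$ and conversely, which is a direct computation using $\mathcal{A}D_\lambda=\lambda D_\lambda$ and $\mathcal{L}D_\lambda=I$. The Hille–Yosida iterates $\|R(\lambda,\mathcal{A}_\Phi)^n\|$ are then bounded by combining the Neumann expansion with the known bounds on $\|R(\lambda,\mathcal{A}_0)^n\|$ inherited from the shift semigroup, yielding the required growth estimate $\|R(\lambda,\mathcal{A}_\Phi)^n\|\le M/(\mathrm{Re}(\lambda)-\omega)^n$ for some constants. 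Generation of the $C_0$-semigroup $\mathcal{T}_\Phi$ then follows from the Hille–Yosida theorem; density of $D(\mathcal{A}_\Phi)$ in $\mathcal{X}$ is a separate but routine check, since $D(\mathcal{A}_0)$ is already dense and $D_\lambda\Phi$ maps into $D(\mathcal{A})$.
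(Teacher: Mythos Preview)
The paper does not prove this theorem: it is quoted from Greiner~\cite{Gre} (``We recall the main result from \cite{Gre} for the reader's convenience'') and then applied, so there is no proof in the paper to compare against.

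For what it is worth, your outline is essentially Greiner's original argument. Your Dirichlet operator $D_\lambda$ is exactly the map the paper calls $\mathcal{L}_\lambda$, the resolvent identity $R(\lambda,\mathcal{A}_\Phi)=R(\lambda,\mathcal{A}_0)+D_\lambda(I-\Phi D_\lambda)^{-1}\Phi R(\lambda,\mathcal{A}_0)$ is Greiner's Lemma~1.2, and the use of the dual hypothesis together with the closed graph theorem to force $\|\Phi D_\lambda\|_{\mathcal{Y}\to\mathcal{Y}}\to 0$ as $\mathrm{Re}\,\lambda\to\infty$ is the content of his Lemma~2.2. The one place your sketch is looser than the actual proof is the last step: controlling the Hille--Yosida iterates $\|R(\lambda,\mathcal{A}_\Phi)^n\|$ by ``combining the Neumann expansion with the known bounds on $\|R(\lambda,\mathcal{A}_0)^n\|$'' is not straightforward, since the correction term does not telescope cleanly under powers. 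Greiner circumvents this by building the perturbed semigroup directly via a variation-of-constants (Dyson--Phillips) series rather than verifying the iterate estimates; your sketch would need a comparable device to be complete.
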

We now apply Theorem \ref{Greiner} to establish the existence of the governing linear semigroup. In our setting we have $\mathcal{X}^*=L^\infty((0,a_m)\times(0,l_m))$, and $\mathcal{Y}^*=L^\infty(0,l_m)$. Furthermore, to compute the adjoint $\Phi^*\,:\,D(\Phi^*)\subset\mathcal{Y}^*\to\mathcal{X}^*$, we note that
\begin{align*}
\langle \Phi\,x,y\rangle= & \,2\int_0^{l_m}y(l)\int_0^{l_m}r(l,\hat{l})\int_0^{a_m}\beta(a,\hat{l})x(a,\hat{l})\,\ud a\,\ud \hat{l}\,\ud l \\
 = & \int_0^{l_m}\int_0^{l_m}\int_0^{a_m}2\,y(l)r(l,\hat{l})\beta(a,\hat{l})x(a,\hat{l})\,\ud a\,\ud l\,\ud \hat{l} \\
 = & \int_0^{l_m}\int_0^{a_m}x(a,\hat{l})\int_0^{l_m}2\,y(l)r(l,\hat{l})\beta(a,\hat{l})\,\ud l\,\ud a\,\ud \hat{l} \\
  & \hspace{72mm} =\langle x,\Phi^*\, y\rangle,
\end{align*}
if we let
\begin{equation}
\Phi^*(y)=2\,\beta(\cdot,\circ)\int_0^{l_m}y(l)r(l,\circ)\,\ud l\in \mathcal{X}^*,\quad D(\Phi^*)=\mathcal{Y}^*.
\end{equation}
Next we note that (see \cite[Sect.III.5]{K}) $g\in D(\mathcal{A}_0^*)$ if there exists an $f\in\mathcal{X}^*$ such that 
\begin{equation}
\langle g,\mathcal{A}_0\, u\rangle=\langle f,u\rangle,\quad \forall\, u\in D(\mathcal{A}_0).
\end{equation}
For any $u\in D(\mathcal{A}_0)$ integration by parts yields
\begin{align}
\int_0^{l_m}\int_0^{a_m} g(a,l)\left(-\frac{\partial u}{\partial a}(a,l)\right)\,\ud a\,\ud l & = \int_0^{l_m}\int_0^{a_m}\frac{\partial g}{\partial a}(a,l)u(a,l)\,\ud a\,\ud l \nonumber  \\
& =\int_0^{l_m}\int_0^{a_m} f(a,l)u(a,l)\,\ud a\,\ud l,
\end{align}
for $g\in \left\{W^{1,\infty}\left((0,a_m);L^1(0,l_m)\right)\,|\,g(a_m,\cdot)\equiv 0\right\}$, and if we let $f=\frac{\partial g}{\partial a}$. Hence it follows from the regularity assumptions on $\beta$ and $r$ that we have 
\begin{equation*}
\Phi^*(\mathcal{Y}^*)\subset \left\{g\in W^{1,\infty}\left((0,a_m);L^1(0,l_m)\right)\,|\, g(a_m,\cdot)\equiv 0\right\}\subseteq D(\mathcal{A}_0^*). 
\end{equation*}
Hence Theorem \ref{Greiner} implies that $\mathcal{A}_\Phi$ generates a strongly continuous semigroup. 

Next we note that for $\lambda\in\rho(\mathcal{A}_0)$, the operator $\mathcal{L}_{|\,Ker(\lambda-\mathcal{A})}$ is a continuous bijection from $\left(ker(\lambda-\mathcal{A}),||\cdot||_A\right)$ onto $\mathcal{Y}$, hence its inverse 
\begin{equation*}
\mathcal{L}_\lambda :=\left(\mathcal{L}_{|\,Ker(\lambda-\mathcal{A})}\right)^{-1}\,: \mathcal{Y}\to\mathcal{X},
\end{equation*} 
is continuous, for $\lambda\in\rho(\mathcal{A}_0)$. In particular, a straightforward calculation shows that 
in our setting we have $\mathcal{L}_\lambda\,:\, y(\circ)\to e^{-\lambda\,\cdot}y(\circ)$, and therefore for $\lambda$ large enough $(\mathcal{I}-\mathcal{L}_\lambda\,\Phi)$ is invertible and positive. Hence by Lemma 1.4 in \cite{Gre} we have that $R(\lambda,\mathcal{A}_\Phi)$ is positive for $\lambda$ large enough. Since  $\mathcal{B}$ is a bounded multiplication operator, we draw the following conclusion.
\begin{corollary}
$\mathcal{A}_\Phi+\mathcal{B}$ generates a positive strongly continuous semigroup of bounded linear operators on $\mathcal{X}$.
\end{corollary}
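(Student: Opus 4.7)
The plan is to combine two standard operator-theoretic tools: the bounded perturbation theorem for strongly continuous semigroups, and the preservation of positivity under bounded perturbations that are positive up to a multiple of the identity.

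First I would verify that $\mathcal{B}$ is a bounded linear operator on all of $\mathcal{X}$. Since $\mu\in L^\infty_+$ and $\beta\in C^1_+$ on the compact rectangle $[0,a_m]\times[0,l_m]$, the multiplication operator $p\mapsto-(\mu+\beta)p$ has operator norm at most $c:=\|\mu+\beta\|_\infty$. The preceding analysis established that $\mathcal{A}_\Phi$ generates a strongly continuous semigroup $\mathcal{T}_\Phi$ on $\mathcal{X}$, so by the classical bounded perturbation theorem (e.g.\ Theorem III.1.3 in \cite{NAG}), the operator $\mathcal{A}_\Phi+\mathcal{B}$ with domain $D(\mathcal{A}_\Phi)$ also generates a strongly continuous semigroup on $\mathcal{X}$. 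Thus the generation half of the statement is essentially immediate.

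The more delicate point is positivity. The excerpt showed $R(\lambda,\mathcal{A}_\Phi)\geq 0$ for all sufficiently large real $\lambda$, and by the standard characterisation (positivity of a $C_0$-semigroup is equivalent to positivity of its resolvent along a real sequence going to $+\infty$) this upgrades to positivity of $\mathcal{T}_\Phi$ itself. To handle the fact that $\mathcal{B}$ is not itself positive (it is multiplication by the non-positive function $-(\mu+\beta)$), I would rewrite
\begin{equation*}
\mathcal{A}_\Phi+\mathcal{B}\;=\;\bigl(\mathcal{A}_\Phi-c\,\mathcal{I}\bigr)\;+\;\bigl(c\,\mathcal{I}+\mathcal{B}\bigr).
\end{equation*}
Then $c\,\mathcal{I}+\mathcal{B}$ is multiplication by the non-negative function $c-(\mu+\beta)$, hence a positive bounded operator on the Banach lattice $\mathcal{X}$; and $\mathcal{A}_\Phi-c\,\mathcal{I}$ generates the rescaled, still positive, semigroup $(e^{-ct}\mathcal{T}_\Phi(t))_{t\geq 0}$.

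Finally, the Dyson--Phillips representation of the perturbed semigroup, applied with unperturbed generator $\mathcal{A}_\Phi-c\,\mathcal{I}$ and positive bounded perturbation $c\,\mathcal{I}+\mathcal{B}$, consists termwise of iterated time integrals of compositions of positive operators, and therefore produces a positive $C_0$-semigroup whose generator is $\mathcal{A}_\Phi+\mathcal{B}$. The main obstacle I anticipate is not analytic but bookkeeping: one must be careful to separate the boundedness of $\mathcal{B}$ (which supplies generation) from the sign information on $\mathcal{B}$ (which supplies positivity only after the shift by $c\,\mathcal{I}$), but no new estimates are required beyond those already assembled in the section.
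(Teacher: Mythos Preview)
Your proposal is correct and essentially coincides with the paper's approach: the paper simply records that ``$\mathcal{B}$ is a bounded multiplication operator'' and declares the corollary, leaving implicit exactly the bounded-perturbation-plus-positivity argument you have spelled out. Your shift-by-$c\mathcal{I}$ trick and appeal to Dyson--Phillips is the standard way to make the positivity step rigorous, and nothing further is needed.
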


\section{Asymptotic behaviour}

In this section we study the asymptotic behaviour of solutions of the linear model \eqref{lin-eq1}-\eqref{lin-eq3}. In particular, as we will see, we are going to characterise the spectral bound of the linear semigroup generator $\mathcal{A}_\Phi+\mathcal{B}$ implicitly via the spectral radius of an associated (bounded) integral operator. We will then obtain estimates for the spectral radius of this integral operator.

As we have pointed out earlier one of the advantages of the perturbation theorem of Greiner (Theorem \ref{Greiner}) is that it allows us to establish a desirable regularity property of the semigroup in a straightforward fashion. To this end, for the rest of the paper we further assume that $r$ satisfies the following regularity condition.
\begin{equation}\label{r-reg}
\sup_{l,\hat{l}}\left|r(l+t,\hat{l})-r(l,\hat{l})\right|\le k\,\delta(t),\quad \text{such that}\quad \lim_{t\to 0}\delta (t)=0,\quad k\in\mathbb{R}.
\end{equation}
Note that, for example if $r$ is continuous on the square $[0,l_m]\times [0,l_m]$, then condition \eqref{r-reg} clearly holds.
We recall now from \cite{Gre} the result we are going to apply, for the readers convenience. In particular, Proposition 3.1 in \cite{Gre} reads as follows. 
\begin{proposition}\label{gre-prop}
If $\mathcal{A}_\Phi$ generates a semigroup and $\Phi$ is compact then $\sigma_{ess}(\mathcal{A}_0)=\sigma_{ess}(\mathcal{A}_\Phi)$. In particular, $\rho_+(\mathcal{A}_0)\cap\,\sigma(\mathcal{A}_\Phi)$ contains only poles of finite algebraic multiplicity of the resolvent $R(\lambda,\mathcal{A}_\Phi)$.
\end{proposition}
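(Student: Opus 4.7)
The approach is to compare the resolvents of $\mathcal{A}_\Phi$ and $\mathcal{A}_0$, show that they differ by a compact operator, and then apply the classical fact that compact resolvent differences preserve the essential spectrum. Once this equality is in hand, the ``in particular'' clause will fall out from standard spectral theory.

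First I would derive a resolvent identity of Greiner type. For $\lambda\in\rho(\mathcal{A}_0)$ large enough that $\mathcal{I}-\Phi\,\mathcal{L}_\lambda$ is invertible on $\mathcal{Y}$ --- which holds by a Neumann series since in our setting $\mathcal{L}_\lambda\,y=e^{-\lambda\,\cdot}y$ has operator norm tending to zero as $\mathrm{Re}\,\lambda\to\infty$, while $\Phi$ is bounded --- every $v\in D(\mathcal{A}_\Phi)$ satisfying $(\lambda-\mathcal{A}_\Phi)v=u$ decomposes uniquely as $v=v_0+\mathcal{L}_\lambda y$ with $v_0\in D(\mathcal{A}_0)$ and $y\in\mathcal{Y}$. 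Imposing the boundary condition $\mathcal{L}\,v=\Phi\,v$ on this decomposition yields $y=(\mathcal{I}-\Phi\,\mathcal{L}_\lambda)^{-1}\Phi\,R(\lambda,\mathcal{A}_0)u$, and hence
$$R(\lambda,\mathcal{A}_\Phi)=R(\lambda,\mathcal{A}_0)+\mathcal{L}_\lambda(\mathcal{I}-\Phi\,\mathcal{L}_\lambda)^{-1}\Phi\,R(\lambda,\mathcal{A}_0).$$

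Since $\Phi$ is compact by hypothesis while $\mathcal{L}_\lambda$, $(\mathcal{I}-\Phi\,\mathcal{L}_\lambda)^{-1}$ and $R(\lambda,\mathcal{A}_0)$ are all bounded, the second summand is compact on $\mathcal{X}$. Thus $R(\lambda,\mathcal{A}_\Phi)-R(\lambda,\mathcal{A}_0)$ is compact for this one (and therefore, by the resolvent identity, for every) $\lambda$ in the common resolvent set. The standard Weyl-type theorem --- two closed operators whose resolvents differ by a compact operator have identical essential spectra --- then yields $\sigma_{ess}(\mathcal{A}_0)=\sigma_{ess}(\mathcal{A}_\Phi)$.

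For the second assertion, $\rho_+(\mathcal{A}_0)$ (the connected component of $\rho(\mathcal{A}_0)$ reaching to $+\infty$ along the real axis) is contained in $\rho(\mathcal{A}_0)$ and therefore disjoint from $\sigma_{ess}(\mathcal{A}_0)=\sigma_{ess}(\mathcal{A}_\Phi)$. Any $\lambda\in\rho_+(\mathcal{A}_0)\cap\sigma(\mathcal{A}_\Phi)$ thus lies in $\sigma(\mathcal{A}_\Phi)\setminus\sigma_{ess}(\mathcal{A}_\Phi)$, and by the very definition of the essential spectrum such points are isolated eigenvalues of finite algebraic multiplicity, equivalently, poles of $R(\cdot,\mathcal{A}_\Phi)$ of finite algebraic multiplicity. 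The main technical obstacle is the first step, namely setting up and justifying the Greiner resolvent identity; specifically one needs $\mathcal{L}_{|\mathrm{Ker}(\lambda-\mathcal{A})}$ to be a topological isomorphism onto $\mathcal{Y}$ so that $\mathcal{L}_\lambda$ is well defined and has the explicit form used above. This is exactly the content of Lemma~1.4 in \cite{Gre}, which has already been cited in the previous section, so the argument goes through.
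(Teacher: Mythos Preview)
The paper does not prove this proposition at all: it is explicitly introduced with ``We recall now from \cite{Gre} the result we are going to apply, for the readers convenience. In particular, Proposition~3.1 in \cite{Gre} reads as follows.'' So there is no ``paper's own proof'' to compare against; the statement is quoted verbatim from Greiner and then \emph{applied} in the subsequent Proposition~\ref{prop}.

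That said, your sketch is essentially Greiner's own argument: establish the resolvent identity
\[
R(\lambda,\mathcal{A}_\Phi)=R(\lambda,\mathcal{A}_0)+\mathcal{L}_\lambda(\mathcal{I}-\Phi\,\mathcal{L}_\lambda)^{-1}\Phi\,R(\lambda,\mathcal{A}_0),
\]
observe that the correction term is compact because $\Phi$ is, and invoke invariance of the essential spectrum under compact resolvent perturbations. One point deserves more care. In the last paragraph you write that any $\lambda\in\sigma(\mathcal{A}_\Phi)\setminus\sigma_{ess}(\mathcal{A}_\Phi)$ is ``by the very definition of the essential spectrum'' an isolated pole of finite algebraic multiplicity. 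That is only automatic for the Browder essential spectrum; for the more common Weyl or Kato--Fredholm versions it is not. What actually closes the argument is that $\rho_+(\mathcal{A}_0)$ is a \emph{connected} open set, disjoint from $\sigma_{ess}(\mathcal{A}_\Phi)$, which meets $\rho(\mathcal{A}_\Phi)$ (for large real $\lambda$). On such a region $\lambda-\mathcal{A}_\Phi$ is Fredholm of index zero, and the analytic Fredholm alternative then forces $\sigma(\mathcal{A}_\Phi)\cap\rho_+(\mathcal{A}_0)$ to be a discrete set of finite-rank poles. You should make this connectedness/analytic-Fredholm step explicit rather than folding it into ``by definition''.
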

In the proposition above $\rho_+(\mathcal{A}_0)$ stands for the component of the resolvent set of $\mathcal{A}_0$, which is unbounded to the right. We apply now Proposition \ref{gre-prop} in our setting. In what follows, with a slight abuse of notation, we will denote the semigroup generated by $\mathcal{A}_\Phi+\mathcal{B}$ by $\mathcal{T}_\Phi$.
\begin{proposition}\label{prop}
Assume that \eqref{r-reg} holds. Then the spectrum of the governing li\-ne\-ar semigroup $\mathcal{T}_\Phi$ may contain only elements of the form $e^{t\lambda}$, where $\lambda$ is an eigenvalue of its generator  $\mathcal{A}_\Phi+\mathcal{B}$.
\end{proposition}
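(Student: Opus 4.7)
The plan is to verify compactness of the boundary operator $\Phi$, apply Greiner's Proposition \ref{gre-prop} after absorbing the bounded multiplication $\mathcal{B}$ into the unperturbed maximal operator, exploit nilpotency of the resulting unperturbed semigroup to conclude that the essential spectrum of the generator is empty, and finally transfer this information to $\mathcal{T}_\Phi$ via a Dyson--Phillips-type argument that terminates after finitely many (compact) terms.

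First, I would establish compactness of $\Phi\,:\,\mathcal{X}\to\mathcal{Y}$. Since $\mathcal{Y}=L^1(0,l_m)$, the Kolmogorov--M.~Riesz--Fr\'echet criterion reduces the job to three checks on the image of the unit ball: uniform boundedness in $\mathcal{Y}$ (immediate from $r,\beta\in L^\infty$), equi-integrability on the bounded interval $(0,l_m)$ (automatic), and uniform $L^1$-smallness of translations of $\Phi u$. For the latter,
\[
|\Phi(u)(l+t)-\Phi(u)(l)|\le 2\int_0^{l_m}|r(l+t,\hat{l})-r(l,\hat{l})|\int_0^{a_m}\beta(a,\hat{l})|u(a,\hat{l})|\,\ud a\,\ud\hat{l},
\]
so condition \eqref{r-reg} yields $\|\Phi(u)(\cdot+t)-\Phi(u)(\cdot)\|_\mathcal{Y}\le 2kl_m\delta(t)\|\beta\|_\infty\|u\|_\mathcal{X}$, which tends to zero as $t\to 0$ uniformly on bounded sets.

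Next, I would redo the Greiner framework with $\widetilde{\mathcal{A}}:=\mathcal{A}+\mathcal{B}$ replacing $\mathcal{A}$ (same domain $D(\mathcal{A})$ and trace operator $\mathcal{L}$). The $\mathrm{Ker}(\mathcal{L})$-restriction $\widetilde{\mathcal{A}}_0=\mathcal{A}_0+\mathcal{B}$ generates the modification of $\mathcal{S}$ in \eqref{mod-sg} obtained by incorporating the survival factor $\exp\bigl(-\int_{a-t}^a(\mu+\beta)(s,l)\,\ud s\bigr)$, which is again nilpotent with $\widetilde{\mathcal{S}}(t)\equiv 0$ for $t\ge a_m$. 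Hence $R(\lambda,\widetilde{\mathcal{A}}_0)=\int_0^{a_m}e^{-\lambda t}\widetilde{\mathcal{S}}(t)\,\ud t$ is an entire function of $\lambda$, so $\sigma(\widetilde{\mathcal{A}}_0)=\emptyset$; in particular $\sigma_{ess}(\widetilde{\mathcal{A}}_0)=\emptyset$ and $\rho_+(\widetilde{\mathcal{A}}_0)=\mathbb{C}$. Proposition \ref{gre-prop} then yields $\sigma_{ess}(\mathcal{A}_\Phi+\mathcal{B})=\emptyset$ and forces every point of $\sigma(\mathcal{A}_\Phi+\mathcal{B})$ to be a pole of the resolvent of finite algebraic multiplicity, hence an eigenvalue.

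To promote this to a statement about the semigroup, I would iterate the variation-of-parameters formula expressing $\mathcal{T}_\Phi$ in terms of $\widetilde{\mathcal{S}}$, the compact operator $\Phi$, and the trace inverse $\mathcal{L}_\lambda$: the resulting Dyson--Phillips-type expansion has the property that every successive summand carries an additional factor of $\widetilde{\mathcal{S}}$, so that only finitely many terms survive once the cumulative age-shift exceeds $a_m$, and every term beyond the zeroth contains a copy of $\Phi$ and is therefore compact. Consequently $\mathcal{T}_\Phi(t)$ is compact for $t\ge a_m$, and the spectral mapping theorem for eventually compact semigroups delivers $\sigma(\mathcal{T}_\Phi(t))\setminus\{0\}=\{e^{t\lambda}:\lambda\in\sigma(\mathcal{A}_\Phi+\mathcal{B})\}$, as required. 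The main obstacle is precisely this last step: because Greiner's setup places the perturbation on the boundary rather than in the bulk, one has to verify carefully that the adapted Dyson--Phillips expansion is finite and that each term past the zeroth inherits compactness from $\Phi$, rather than simply quoting a Miyadera-type bulk perturbation result.
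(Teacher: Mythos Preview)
Your verification that $\Phi$ is compact via Fr\'echet--Kolmogorov and your absorption of the bounded multiplication $\mathcal{B}$ into the maximal operator before invoking Proposition~\ref{gre-prop} are correct and essentially coincide with the paper's argument.

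The genuine gap is in your last step. The claim that the Dyson--Phillips-type expansion for the boundary-perturbed semigroup has only finitely many nonzero terms is false in this setting: each application of $\Phi$ returns cells to age zero, so the ``cumulative age-shift'' does \emph{not} accumulate across successive summands. In the Volterra iteration for the birth function $B(s)=\Phi\,\mathcal{T}_\Phi(s)u$, the $n$-th term carries ages $a_1,\dots,a_n$ constrained only by $a_1+\cdots+a_n\le t$ with each $a_i\in(0,a_m)$; nilpotency of $\widetilde{\mathcal{S}}$ forces each individual $a_i<a_m$ but gives no bound on $n$ unless $\beta$ vanishes on some initial age interval, which is not assumed. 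Hence the expansion is generically infinite, and an infinite series of compact operators need not be compact without operator-norm convergence, which you have not established. Your route to eventual compactness therefore does not go through, and you correctly flagged this as the obstacle but the proposed resolution fails. The paper in fact remarks immediately after this proposition that one ``could not conclude \dots\ that the semigroup is eventually compact,'' which is precisely why the authors do \emph{not} aim for a spectral mapping theorem via eventual compactness: their proof stops once $\Phi$ is shown compact, feeding nilpotency of $\mathcal{A}_0$ (so $\rho_+(\mathcal{A}_0)=\mathbb{C}$) and boundedness of $\mathcal{B}$ directly into Proposition~\ref{gre-prop} to obtain that every spectral value of the generator is a pole of finite algebraic multiplicity, and the semigroup conclusion is drawn from Greiner's framework itself rather than from eventual compactness.
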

\begin{proof}
We note that $\mathcal{B}$ is bounded and $\mathcal{A}_0$ generates a nilpotent semigroup.  Hence, utilising Proposition \ref{gre-prop}, it is only left to show that $\Phi$ is compact. Let $S_{\mathcal{X}}$ denote the unit sphere of $\mathcal{X}$. We have to show that $\Phi(S_{\mathcal{X}})$ is relatively compact in $\mathcal{Y}=L^1(0,l_m)$. Using the Fr\'{e}chet-Kolmogorov criterion of compactness of sets in $L^1$ \cite[Ch.X.1]{Y}, it is enough to show that on the one hand we have
\begin{equation}
\left|\left|\Phi\,S_{\mathcal{X}}\right|\right|_{\mathcal{Y}}=\int_0^{l_m}\left|(\Phi\, x)(l)\right|\,\ud l\le C\,\bar{\beta}\,\bar{r}.
\end{equation}
On the other hand we have 
\begin{align*}
& \int_0^{l_m}\left|(\Phi\,x)(l+t)-(\Phi\, x)(l)\right|\,\ud l \\
 \le & \int_0^{l_m}\int_0^{l_m}2\left|\int_0^{a_m}\beta(a,\hat{l})x(a,\hat{l})\,\ud a\right|\left|r(l+t,\hat{l})-r(l,\hat{l})\right|\,\ud \hat{l}\,\ud l \\
\le & \,2\,l_m\,\bar{\beta}\,k\,\delta(t),
\end{align*}
therefore we have 
\begin{equation*}
\displaystyle\lim_{t\to 0}\int_0^{l_m}\left|(\Phi\,x)(l+t)-(\Phi\, x)(l)\right|\,\ud l=0,
\end{equation*} 
uniformly in $\Phi x$.
\end{proof}
We shall point out that we really needed to utilise Proposition \ref{gre-prop} by Greiner, to obtain that the asymptotic behaviour of the semigroup is determined by the leading eigenvalue of its generator (if it exists). This is due to the distributed structuring with respect to the telomere length of cells, which implies that the governing semigroup is not necessarily eventually differentiable; hence we could not conclude, as for example in \cite{FH} for a classic size-structured models, that the semigroup is eventually compact, and henceforth the spectral mapping theorem holds true. 

As we have seen earlier the semigroup $\mathcal{T}_\Phi$ generated by $\mathcal{A}_\Phi+\mathcal{B}$ is clearly positive, but it may not necessarily be irreducible. To see this, recall from \cite{NAG}, that the semigroup generated by $\mathcal{A}_\Phi+\mathcal{B}$ is irreducible if and only if for every $f$, $0\not\equiv f\in \mathcal{X}_+$, we have that $R(\lambda,\mathcal{A}_\Phi+\mathcal{B})f\gg 0$, i.e. the resolvent is strictly positive, for some $\lambda>s(\mathcal{A}_\Phi+\mathcal{B})$. 

Let $f\in\mathcal{X}_+$, and note that the solution of the resolvent equation \\ $R(\lambda,\mathcal{A}_\Phi+\mathcal{B})f=u$ is
\begin{align}
u(a,\cdot)= & \exp\left\{-\int_0^a\left(\mu(\hat{a},\cdot)+\beta(\hat{a},\cdot)+\lambda\right)\,\ud \hat{a}\right\} \nonumber \\
& \times\left(u(0,\cdot)+\int_0^a\exp\left\{\int_0^s\left(\mu(\hat{a},\cdot)+\beta(\hat{a},\cdot)+\lambda\right)\,\ud \hat{a}\right\}f(s,\cdot)\,\ud s\right). \label{irred1}
\end{align}
Applying the boundary operator $\Phi$ on both sides of equation \eqref{irred1}, it is easily shown that $u(0,\cdot)$ satisfies the inhomogeneous integral equation
\begin{align}
u(0,\cdot)= & 2\int_0^{l_m}u(0,\hat{l})r(\cdot,\hat{l})\int_0^{a_m}\beta(a,\hat{l})\pi(a,\hat{l},\lambda)\,\ud a\,\ud \hat{l} \nonumber \\
& + 2\int_0^{l_m}r(\cdot,\hat{l})\int_0^{a_m}\beta(a,\hat{l})\pi(a,\hat{l},\lambda)\int_0^a\frac{f(s,\hat{l})}{\pi(s,\hat{l},\lambda)}\,\ud s\,\ud a\,\ud \hat{l}. \label{irred2}
\end{align}
Above in \eqref{irred2} we introduced the notation
\begin{equation*}
\pi(a,l,\lambda)=\exp\left\{-\int_0^a\left(\mu(\hat{a},l)+\beta(\hat{a},l)+\lambda\right)\,\ud \hat{a}\right\}.
\end{equation*}
Note that since telomere length is preserved during the lifetime of an individual, it is intuitively clear that the semigroup is irreducible if offspring of all telomere length is produced by some individuals. In other words, if there were individuals of particular telomere lengths who would not produce individuals of any other lengths, then the semigroup would be reducible. We formulate now a rigorous condition for the irreducibility of the semigroup, as this will play an important role later in the qualitative analysis of model \eqref{lin-eq1}-\eqref{lin-eq3}. 
\begin{proposition}
The semigroup $\mathcal{T}_\Phi$ generated by $\mathcal{A}_\Phi+\mathcal{B}$ is irreducible if and only if for any set $I\subset [0,l_m]$ of positive Lebesgue measure, such that its complement set $\bar{I}$ is also a set of positive Lebesgue measure, we have
\begin{equation}
\int_{\bar{I}}\int_I r(l,\hat{l})\,\ud \hat{l}\,\ud l\ne 0.\label{sg-irred}
\end{equation}
\end{proposition}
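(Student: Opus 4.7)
The plan is to exploit the characterization recalled just above the proposition: $\mathcal{T}_\Phi$ is irreducible if and only if for every non-trivial $f\in\mathcal{X}_+$ there exists $\lambda>s(\mathcal{A}_\Phi+\mathcal{B})$ with $u:=R(\lambda,\mathcal{A}_\Phi+\mathcal{B})f\gg 0$. Formulas \eqref{irred1}--\eqref{irred2} already reduce the question of strict positivity of $u$ to strict positivity of the trace $u(0,\cdot)\in\mathcal{Y}$, since $\pi(\cdot,\cdot,\lambda)>0$ and the $f$-driven integral term in \eqref{irred1} is non-negative. So the entire analysis can be carried out at the boundary.

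For necessity I would argue by contrapositive. Suppose \eqref{sg-irred} fails: there exist $I\subset[0,l_m]$ with $|I|,|\bar{I}|>0$ and $r(l,\hat{l})=0$ for a.e.\ $(l,\hat{l})\in\bar{I}\times I$. Then the closed lattice ideal
\[
\mathcal{X}_I:=\{f\in\mathcal{X}:f(a,l)=0\text{ for a.e.\ }(a,l)\in(0,a_m)\times\bar{I}\}
\]
is non-trivial and invariant under $\mathcal{T}_\Phi$. Indeed, the transport $\partial_t+\partial_a$ preserves telomere length, and the boundary input \eqref{line-eq2} distributes daughters born from mothers of length $\hat{l}\in I$ according to $r(\cdot,\hat{l})$, which vanishes a.e.\ on $\bar{I}$. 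Hence $\mathcal{T}_\Phi$ admits a non-trivial invariant closed ideal and is not irreducible.

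For sufficiency, assume \eqref{sg-irred} and fix a non-trivial $f\in\mathcal{X}_+$. Rewrite \eqref{irred2} as $u(0,\cdot)=\phi+T_\lambda u(0,\cdot)$, where
\[
(T_\lambda\psi)(l)=\int_0^{l_m}K(l,\hat{l})\psi(\hat{l})\,\ud\hat{l},\qquad K(l,\hat{l})=2r(l,\hat{l})\int_0^{a_m}\beta(a,\hat{l})\pi(a,\hat{l},\lambda)\,\ud a,
\]
and $\phi\in\mathcal{Y}_+$ is the inhomogeneous term built from $f$. For $\lambda$ large enough $\|T_\lambda\|<1$, so $u(0,\cdot)=\sum_{n\ge 0}T_\lambda^n\phi\ge 0$. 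Set $J:=\{l\in(0,l_m):u(0,l)>0\}$. Using $f\not\equiv 0$, the standing assumption that $\beta(\cdot,\hat{l})\not\equiv 0$ for $\hat{l}>0$, and positivity of $\pi$, one first shows $\phi\not\equiv 0$, so $|J|>0$. If $|\bar{J}|>0$, condition \eqref{sg-irred} gives $\int_{\bar{J}}\int_J r(l,\hat{l})\,\ud\hat{l}\,\ud l>0$; combined with $\int_0^{a_m}\beta(a,\hat{l})\pi(a,\hat{l},\lambda)\,\ud a>0$ for a.e.\ $\hat{l}$, this produces a subset of $\bar{J}$ of positive measure on which $\int_J K(l,\hat{l})u(0,\hat{l})\,\ud\hat{l}>0$. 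But $u(0,\cdot)\ge T_\lambda u(0,\cdot)$ then forces $u(0,\cdot)>0$ on that subset, contradicting the definition of $\bar{J}$. Hence $u(0,\cdot)>0$ a.e., and \eqref{irred1} yields $u\gg 0$.

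The main obstacle I expect is verifying $\phi\not\equiv 0$ whenever $f\not\equiv 0$: the age-profile of $f$ may sit awkwardly relative to the support of $\beta(\cdot,\hat{l})$, so one has to isolate by Fubini a set of $\hat{l}$ of positive measure on which $f(\cdot,\hat{l})\not\equiv 0$, identify an age layer on which $\beta$ acts on the already aged-in mass, and then use \eqref{sg-irred} one more time to propagate this positivity through $r$ to $\phi$. Once this seed positivity is secured, the remainder of the sufficiency argument is a clean Markov-chain-style connectivity bootstrap driven by \eqref{sg-irred}.
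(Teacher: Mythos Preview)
Your approach coincides with the paper's: both directions are run through the resolvent formulas \eqref{irred1}--\eqref{irred2} and the strict-positivity characterization of irreducibility. For necessity the paper, like you, argues by contrapositive---taking $f$ supported in $(0,a_m)\times I$ and observing that the resolvent then vanishes on $(0,a_m)\times\bar{I}$---which is precisely your invariant-ideal argument read at the resolvent level. For sufficiency the paper also argues by contradiction from a set $J$ of positive measure on which the resolvent vanishes, concluding $\int_J\int_{\bar{J}}r=0$; your Neumann-series bootstrap is a more explicit rendering of the same idea. The obstacle you single out---showing that the inhomogeneous term $\phi$ is non-trivial for \emph{every} non-trivial $f\in\mathcal{X}_+$, when the age-support of $f$ might sit above the support of $\beta(\cdot,\hat{l})$---is genuine, and the paper's short proof does not address it either (it passes directly from ``$u(0,\cdot)$ vanishes on $J$'' to ``$\int_J\int_{\bar{J}}r=0$'' without isolating where $u(0,\cdot)$ is strictly positive on $\bar{J}$). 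So your proposal is at least as complete as the paper's, and your flagged difficulty is exactly the point that would need extra care in either version.
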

\begin{proof}
Note that by virtue of the positivity of the semigroup generated by $\mathcal{A}_\Phi+\mathcal{B}$, the resolvent operator $R(\lambda,\mathcal{A}_\Phi+\mathcal{B})$ is positive, for $\lambda$ large enough. Hence the solution $u(0,\cdot)$ of equation \eqref{irred2} is necessarily non-negative almost everywhere. Assume now that \eqref{sg-irred} does not hold, i.e. $\int_{\bar{I}}\int_I r(l,\hat{l})\,\ud \hat{l}\,\ud l=0$ for some sets $I,\,\bar{I}$ of positive measure. Then it is clear that for any $f$ vanishing on $\bar{I}$, equation \eqref{irred2} would admit a solution $u(0,\cdot)$ vanishing on $\bar{I}$, too; and therefore $u$ given by \eqref{irred1} would also vanish on $\bar{I}$. On the other hand, if there was a function $u\not\equiv 0$ vanishing on a set $J\subset [0,l_m]$ of positive measure for almost every $a\in (0,a_m)$, then equation \eqref{irred1} would imply that the solution $u(0,\cdot)$ of \eqref{irred2} would also vanish on $J$, but then this would clearly imply $\int_J\int_{\bar{J}} r(l,\hat{l})\,\ud \hat{l}\,\ud l=0$, a contradiction.
\end{proof}

Proposition \ref{prop} implies that the asymptotic behaviour of solutions of model \eqref{lin-eq1}-\eqref{lin-eq3} is determined by the eigenvalues of the generator (if there are any), hence we study this eigenvalue problem now. In particular, the solution of the eigenvalue problem
\begin{equation}
(\mathcal{A}_\Phi+\mathcal{B})\,\psi=\lambda\,\psi,\quad \psi(0)=\Phi\, \psi, \label{eigv-1}
\end{equation}
is given by
\begin{equation}
\psi(a,l)=\psi(0,l)\exp\left\{-\int_0^a\left(\beta(\hat{a},l)+\mu(\hat{a},l)+\lambda\right)\,\ud \hat{a}\right\}. \label{eigv-2}
\end{equation}
Applying the boundary operator $\Phi$ on both sides of the equation above we have
\begin{equation}
\psi(0,l)=2\int_0^{l_m}r(l,\hat{l})\psi(0,\hat{l})K(\hat{l},\lambda)\,\ud \hat{l}, \label{eigv-3}
\end{equation}
where we defined
\begin{equation}
K(\cdot,\lambda)=\int_0^{a_m}\beta(a,\cdot)\exp\left\{-\int_0^a\left(\beta(\hat{a},\cdot)+\mu(\hat{a},\cdot)+\lambda\right)\,\ud \hat{a}\right\}\,\ud a. \label{eigv-4}
\end{equation}
Hence $\lambda\in\mathbb{C}$ is an eigenvalue of $\mathcal{A}_\Phi+\mathcal{B}$ if and only if, for the given $\lambda$, the integral equation \eqref{eigv-3} has a non-trivial solution $\psi(0,\cdot)$. Then, the eigenvector corresponding to $\lambda$ 
is given by \eqref{eigv-2}. We are in particular interested in the leading eigenvalue (if it exists), which is the spectral bound of the generator, since the semigroup $\mathcal{T}_\Phi$ is positive. This  dominant real eigenvalue, together with the cor\-res\-pon\-ding eigenspace,  determines the asymptotic behaviour of solutions. Also note that equation \eqref{eigv-3} gives naturally rise to define a parametrised family of (positive and bounded) integral operators $\mathcal{O}_\lambda$ as
\begin{equation}
\mathcal{O}_\lambda\, x=2\int_0^{l_m}r(\cdot,\hat{l})K(\hat{l},\lambda)x(\hat{l})\,\ud \hat{l},\quad D(\mathcal{O}_\lambda)=L^1(0,l_m),\quad \lambda\in\mathbb{R}.\label{int-op}
\end{equation}
With this, the characteristic equation \eqref{eigv-3}, which is notably a functional equation, in contrast to a scalar equation in case of a model with age-structure only; can be viewed as an eigenvalue problem for a bounded linear integral operator. 
More precisely, $\lambda$ is an eigenvalue of the generator $\mathcal{A}_\Phi+\mathcal{B}$, if and only the integral operator 
$\mathcal{O}_\lambda$ has eigenvalue $1$. Note that, since $K$ defined in \eqref{eigv-4} is strictly positive, the integral operator $\mathcal{O}_\lambda$ (for every $\lambda\in\mathbb{R}$) is irreducible if and only if condition \eqref{sg-irred} holds \cite[Ch.V]{Sch}. Hence, rightly so, the irreducibility conditions of the semigroup $\mathcal{T}_\Phi$ and the integral operator $\mathcal{O}_\lambda$ coincide. 

Also note that the function $[0,\infty)\ni\lambda\to r(\mathcal{O}_\lambda)$ is continuous and strictly monotone decreasing. These properties can be established by using perturbation results from \cite{AB} and \cite{K}, respectively; see also \cite{CF2012,CP} for similar developments. Also note that if $r$ satisfies condition \eqref{r-reg} then $\mathcal{O}_0$ is shown to be compact exactly in the same way as the operator $\Phi$ was shown to be compact earlier. Hence the spectrum of $\mathcal{O}_0$ may contain only eigenvalues and $0$.

Therefore, in the case when the spectrum of $\mathcal{A}_\Phi+\mathcal{B}$ is not empty, we have the following complete  characterisation of the asymptotic behaviour of solutions of model \eqref{lin-eq1}-\eqref{lin-eq3}.
\begin{enumerate}
\item If $r(\mathcal{O}_0)<1$, then solutions of model \eqref{lin-eq1}-\eqref{lin-eq3} decay exponentially.
\item If $r(\mathcal{O}_0)>1$, then solutions of model \eqref{lin-eq1}-\eqref{lin-eq3} grow exponentially. Moreover, if $r$ satisfies condition \eqref{sg-irred}, then solutions exhibit asynchronous exponential growth.
\item If $r(\mathcal{O}_0)=1$, then for any eigenvector $\psi(0,\cdot)$ corresponding to the spectral radius $1$ of $\mathcal{O}_0$, the function $\psi$ in \eqref{eigv-2} determines a one-parameter family of positive steady states of model \eqref{lin-eq1}-\eqref{lin-eq3}. If $r$ satisfies \eqref{sg-irred}, then there is only one such family of steady states, moreover they are strictly positive, too.
\end{enumerate} 

After the previous general analysis of the asymptotic behaviour of our model next we intend to study the effect of the well-known capacity of telomere restoring of cancer cells on the dynamics. In particular we are going to show, that at least for some general classes of the model ingredients, the telomere length restoring capacity of cancer cells may have a drastic effect on the asymptotic behaviour of solutions already in the linear model \eqref{lin-eq1}-\eqref{lin-eq3}. 
First we note that in the absence of telomere restoring capacity of cells, the function $r$ necessarily vanishes on the half square $\hat{l}\le l$. That is, when a mother cell divides, it only gives birth to daughter cells with shorter telomeres. This is a well-known mechanism observed in healthy cell populations. In particular this telomere shortening of healthy cells results in apoptosis (when reaching the celebrated Hayflick limit) and prevent the possibility of drastic mutations caused by a very large number of iterations of faulty DNA replication. In this case the boundary condition \eqref{line-eq2} can be rewritten as 
\begin{equation}
p(0,l,t)=2\int_l^{l_m}r(l,\hat{l})\int_0^{a_m}\beta(a,\hat{l})p(a,\hat{l})\,\ud a\,\ud \hat{l}, \quad 0<l\le l_m,\,t>0.\label{new-boundary}
\end{equation}
This in particular implies that the eigenvalue problem \eqref{eigv-3} now reads
\begin{equation}
\Psi(l)=2\int_l^{l_m}r(l,\hat{l})\Psi(\hat{l})K(\hat{l},\lambda)\,\ud \hat{l}, \label{new-eigv}
\end{equation}
where we also introduced the notation $\Psi(l):=\psi(0,l)$, for simplicity. Let us assume now that the the function $r$ is separable, i.e. $r(l,\hat{l})=r_1(l)r_2(\hat{l})$ holds for some functions $r_1,r_2$. For example we may assume that $r_1$ is continuously differentiable and $r_2$ is bounded, and that $r_1$ is positive while $r_2$ is non-negative. Then assumption \eqref{r-reg} clearly holds. In this case differentiating equation \eqref{new-eigv} (assuming that an eigenvector $\psi$ with a smooth $\Psi$ component exists) yields the differential equation
\begin{equation}
\Psi'(l)=\Psi(l)\left(\frac{r'_1(l)}{r_1(l)}-2r_1(l)r_2(l)K(l,\lambda)\right),\label{new-eigv2}
\end{equation}
together with the initial condition
\begin{equation}
\Psi(0)=2r_1(0)\int_0^{l_m}r_2(\hat{l})K(\hat{l},\lambda)\Psi(\hat{l})\,\ud \hat{l}.\label{new-eigv3}
\end{equation}
The solution of \eqref{new-eigv2} is
\begin{equation}
\Psi(l)=\Psi(0)\frac{r_1(l)}{r_1(0)}\exp\left\{ -\int_0^l 2r_1(\hat{l})r_2(\hat{l})K(\hat{l},\lambda)\,\ud \hat{l}\right\},\label{new-eigv4}
\end{equation}
which, utilising \eqref{new-eigv3}, leads to the following characteristic equation
\begin{align}
1= & \,2\int_0^{l_m}r_1(l)r_2(l)K(l,\lambda)\exp\left\{-2\int_0^lr_1(\hat{l})r_2(\hat{l})K(\hat{l},\lambda)\,\ud \hat{l}\right\}\,\ud l \nonumber \\
 = & \,1-\exp\left\{-2\int_0^{l_m}r_1(l)r_2(l)K(\hat{l},\lambda)\,\ud \hat{l}\right\}.\label{new-eigv5}
\end{align}
It is clear that equation \eqref{new-eigv5} does not admit any solution $\lambda\in\mathbb{R}$, which, together with the positivity of the semigroup, implies that the spectrum of $\mathcal{A}_\Phi+\mathcal{B}$ does not contain any eigenvalue with a corresponding eigenvector $\psi$ with a continuously differentiable $\psi(0,l)$. On the other hand it is clear from equation \eqref{new-eigv} that any eigenvector is continuous with respect to its second variable. From the biological point of view this phenomenon is associated with the constant loss of telomere length of newborn cells. Indeed, in the absence of telomere restoring capacity we may expect that the cell population accumulates at the minimal length. 
From the mathematical point of view, the non-existence of differentiable eigenvectors is associated with the fact that the governing semigroup cannot shown to be eventually differentiable due to the telomere length structuring.

Next we consider what happens if we account for the telomere restoring capacity of cancer cells. In particular we are going to show that in this case even exponential growth of the cancer cell population is possible. As before, we start with the case of a  separable $r$, i.e. we assume that $r(l,\hat{l})=r_1(l)r_2(\hat{l})$ for some functions $r_1,r_2$. In this case the integral operator $\mathcal{O}_0$ is of rank one, and it is rather straightforward to exactly determine the spectral radius of $\mathcal{O}_0$, which, as we have seen earlier, determines the asymptotic behaviour of the semigroup $\mathcal{T}_\Phi$. In particular, we have
\begin{align}
r(\mathcal{O}_0)= & 2\int_0^{l_m}r_1(l)r_2(l)K(l,0)\,\ud l \nonumber \\
= & 2\int_0^{l_m}r_1(l)r_2(l)\int_0^{a_m}\beta(a,l)\exp\left\{-\int_0^a\left(\beta(\hat{a},l)+\mu(\hat{a},l)\right)\,\ud \hat{a}\right\}\,\ud a\,\ud l.\label{sep-radius}
\end{align}
From \eqref{sep-radius} we can see that depending on the functions $r_1,r_2,\mu$ and $\beta$, the spectral radius $r(\mathcal{O}_0)$ may be greater than $1$. For example in the case of constant functions $r_1,r_2,\mu,\beta$, and setting $l_m=a_m=1$ (note that we can always normalise the maximal age and telomere length), we have 
\begin{equation*}
r(\mathcal{O}_0)=2r_1r_2\frac{1-e^{-(\beta+\mu)}}{\beta+\mu}.
\end{equation*}

To obtain estimates for the spectral radius of $\mathcal{O}_0$ in the more general and difficult non-separable case, note that the Krein-Rutman theorem asserts that if $\mathcal{O}_0$ is compact and positive, and its spectral radius is positive, then it has a positive (not necessarily strictly positive) eigenvector corresponding to its spectral radius. On the other hand de Pagter proved in \cite{Pagter} that if the operator is also irreducible then its spectral radius is strictly positive. As we noted earlier if $r$ satisfies condition \eqref{r-reg} then $\mathcal{O}_0$ is shown to be compact in the same way as the operator $\Phi$.  Assuming now that the spectral radius is positive, let $u\in L^1_+(0,l_m)$ denote the positive eigenvector corresponding to the spectral radius. Then we have
\begin{equation}
2\int_0^{l_m}r(\cdot,\hat{l})K(\hat{l},0)u(\hat{l})\,\ud \hat{l}=r(\mathcal{O}_0)\, u(\cdot),
\end{equation}
which yields
\begin{equation}
2\int_0^{l_m}u(\hat{l})K(\hat{l},0)\int_0^{l_m}r(l,\hat{l})\,\ud l\,\ud \hat{l}=r(\mathcal{O}_0)\int_0^{l_m}u(l)\,\ud l.
\end{equation}
This observation allows us to obtain immediately the following estimates for the spectral radius
\begin{equation}
2\min_{\hat{l}}K(\hat{l},0)\int_0^{l_m} r(l,\hat{l})\,\ud l\le r(\mathcal{O}_0)\le 2\max_{\hat{l}}K(\hat{l},0)\int_0^{l_m}r(l,\hat{l})\,\ud l. \label{radius-est}
\end{equation}
To obtain different estimates, in particular when $\mathcal{O}_0$ may not be irreducible we are going to utilise some minimax principles established in \cite{Marek}. Recall that if $\mathcal{X}$ is a Banach lattice with positive cone $K$, and with dual space $\mathcal{X}^*$ and dual cone $K^*$, respectively; then a set $H'\subseteq K^*$ is called $K$-total if and only if from $\langle x,x'\rangle\ge 0,\,\forall\,x'\in H'$ it follows that $x\in K$. Then for any positive linear endomorphism $\mathcal{O}$ on a Banach lattice $\mathcal{X}$ and for any $x\in K$ one defines
\begin{equation*}
r_x(\mathcal{O})=\sup_{\tau}\left\{\tau\in\mathbb{R}\,|\,(\mathcal{O}\,x-\tau x)\in K\right\}.
\end{equation*}
Recall that by Lemma 3.1 in \cite{Marek} for any $K$-total set $H'\subseteq K^*$ we have
\begin{equation}
r_x(\mathcal{O})=\sup_{\tau}\left\{\tau\in\mathbb{R}\,|\,\langle\mathcal{O}\,x,x'\rangle\ge \tau\langle x,x'\rangle,\,x'\in H'\right\}.
\end{equation}
Moreover, Lemma 3.3 in \cite{Marek} asserts that for any $0\not\equiv x\in K$ we have $r_x(\mathcal{O})\le r(\mathcal{O})$. 
If we let $x\equiv 1$, then we have for any $x'\in K^*$
\begin{align}
 \langle\mathcal{O}_0\,1,x'\rangle & \ge 2\,\min_{l}\int_0^{l_m}r(l,\hat{l})K(\hat{l},0)\,\ud \hat{l}\int_0^{l_m}x'(l)\,\ud l \nonumber \\ 
& =2\,\min_{l}\int_0^{l_m}r(l,\hat{l})K(\hat{l},0)\,\ud \hat{l}\,\langle 1,x'\rangle.\label{r-est1}
\end{align}
Similarly, recall from \cite{Marek} that if we define
\begin{equation}
r^x(\mathcal{O})=\inf_{\tau}\left\{\tau\in\mathbb{R}\,|\,\tau\langle x,x'\rangle\ge \langle\mathcal{O}\,x,x'\rangle,\,x'\in H'\right\},
\end{equation}
then for every $x\in K$ we have $r(\mathcal{O})\le r^x(\mathcal{O})$. Again, choosing $x\equiv 1$, we have for any $x'\in K^*$
\begin{align}
\langle\mathcal{O}_0\,1,x'\rangle & \le 2\,\max_{l}\int_0^{l_m}r(l,\hat{l})K(\hat{l},0)\,\ud \hat{l}\int_0^{l_m}x'(l)\,\ud l \nonumber \\
 & =2\,\max_{l}\int_0^{l_m}r(l,\hat{l})K(\hat{l},0)\,\ud \hat{l}\,\langle 1,x'\rangle.\label{r-est2}
\end{align}
Hence we obtain the following estimates for the spectral radius of $\mathcal{O}_0$
\begin{equation}\label{r-est3}
2\,\min_{l}\int_0^{l_m}r(l,\hat{l})K(\hat{l},0)\,\ud \hat{l}\le r(\mathcal{O}_0)\le 2\,\max_{l}\int_0^{l_m}r(l,\hat{l})K(\hat{l},0)\,\ud \hat{l}.
\end{equation}
Note that the estimates \eqref{radius-est} and \eqref{r-est3} are quite different, in general. 

We next provide hypotheses on $r(l,\hat{l})$ that yield specific growth behavior of the solutions in the presence or absence of highest telomere class renewal. It is known that in the discrete telomere length case, the cell population can have polynomial growth or decay with cells with shortest  telomere length having the highest power growth over time (see e.g. \cite{Arino},\cite{Arino2},\cite{DSVBW2007}). Similar results hold in the continuum telomere length case if we divide the population into telomere length classes. We first consider the case of no self-renewal within any class, that is, all cell divisions result in daughter cells in a shorter telomere length class.

\begin{proposition}\label{propW}
Assume there exists $\delta \in (0,\l_m)$ such that for $\hat{l} \in [0,l_m]$, $r(l,\hat{l}) = 0$ for $0 \leq \hat{l} - \delta \leq l \leq  \hat{l} \leq l_m$. Assume that $\beta_{min} \leq \beta(a,l) \leq \beta_{max}$,  $\mu_{min} \leq \mu(a,l)$,  for all $a \in [0,a_{m}]$, $l \in [0,l_m]$, and $r(l,\hat{l}) \leq r_{max}$, for all $l,\hat{l} \in [0,l_m]$. Let 
$$\sigma = \beta_{min} +\mu_{min}, \quad \omega = 2 \,  \delta \, r_{max} \,  \beta_{max}.$$
Let $p(a,l,t)$ be the solution of (\ref{lin-eq1})-(\ref{lin-eq3}), such that $p_0 \in 
D(\mathcal{A}_\Phi), 
p_0 \geq 0$, and let
$$
P_j(t) = \int^{l_m - j \delta}_{l_m-(j+1) \delta} \,  \bigg( \int^{a_m}_0 \, p(a,l,t) \ud a \bigg) \ud l, \quad j=0,1,\dots,N, \quad 0 <  l_m -  (N+1) \delta.
$$
Then
\begin{equation}\label{W1}
P_0(t) \leq e^{-\sigma t} P_0(0), \quad t \geq 0,
\end{equation}
\begin{equation}\label{W2}
P_1(t) \leq e^{-\sigma t} \bigg(P_1(0) + \omega  \, t \, P_0(0) \bigg), \quad t \geq 0,
\end{equation}
\begin{equation}\label{W3}
P_2(t) \leq e^{-\sigma t} \bigg(P_2(0) + \omega \,  t  \, (P_0(0) + P_1(0)) + \frac{\omega^2 t^2}{2} P_0(0)\bigg), \quad t \geq 0,
\end{equation}
and in general
\begin{equation}\label{W4}
P_j(t) \leq e^{-\sigma t} \bigg(P_j(0) + \sum_{k=1}^j \, \frac{\omega^k t^k}{k!} \, 
\sum_{i=0}^{j-k} 
\binom{j-1-i}{k-1}
P_j(0) \bigg), \quad t \geq 0, \quad j\le N.
\end{equation}
\end{proposition}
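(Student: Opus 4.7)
The plan is to reduce the proposition to a triangular system of scalar differential inequalities for the quantities $P_j$ and then iterate. Since $p_0\in D(\mathcal{A}_\Phi)$ produces a classical solution, I can differentiate $P_j$ under the integral sign and use \eqref{lin-eq1} to obtain
\begin{equation*}
\frac{dP_j}{dt}(t)=\int_{l_m-(j+1)\delta}^{l_m-j\delta}[p(0,l,t)-p(a_m,l,t)]\,\ud l-\int_{l_m-(j+1)\delta}^{l_m-j\delta}\int_0^{a_m}(\beta+\mu)p\,\ud a\,\ud l.
\end{equation*}
Positivity of $\mathcal{T}_\Phi$, established in Section 2, makes the boundary term at $a_m$ non-negative, so it can be dropped in the upper bound; the loss term is bounded using $\beta\ge\beta_{min}$ and $\mu\ge\mu_{min}$, producing the contribution $-\sigma P_j(t)$.

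Next I would exploit the support hypothesis on $r$. Interchanging the order of integration in the inflow term, the inner $l$-integral $\int_{l_m-(j+1)\delta}^{l_m-j\delta}r(l,\hat l)\,\ud l$ vanishes unless $\hat l-\delta<l_m-j\delta$, i.e.\ $\hat l>l_m-j\delta$. Thus the admissible mother telomere lengths form exactly $(l_m-j\delta,l_m]=\bigcup_{i=0}^{j-1}[l_m-(i+1)\delta,l_m-i\delta]$, the union of the preceding slabs. Bounding $r\le r_{max}$, $\beta\le\beta_{max}$ and using that the $l$-slab has length $\delta$ gives the key recursive inequality
\begin{equation*}
\frac{dP_j}{dt}(t)\le -\sigma P_j(t)+\omega\sum_{i=0}^{j-1}P_i(t),\quad j=0,1,\dots,N,
\end{equation*}
where the sum is empty when $j=0$. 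This instantly yields \eqref{W1} by Gronwall.

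The remainder is an induction on $j$. Multiplying by $e^{\sigma t}$ converts the inequality into $\tfrac{d}{dt}(e^{\sigma t}P_j)\le\omega e^{\sigma t}\sum_{i<j}P_i$; substituting the inductive bounds for each $P_i$ and integrating term by term generates coefficients of the form $\omega^k t^k/k!$. Direct calculation recovers \eqref{W2} (case $j=1$) and \eqref{W3} (case $j=2$). The main obstacle in the general case \eqref{W4} is checking that the coefficient of $(\omega^k t^k/k!)\,P_m(0)$ produced by the inductive integration equals $\binom{j-1-m}{k-1}$. After the algebra this reduces to the combinatorial identity
\begin{equation*}
\sum_{i=m+k}^{j-1}\binom{i-1-m}{k-1}=\binom{j-1-m}{k},
\end{equation*}
which is the hockey-stick identity $\sum_{n=r}^{N}\binom{n}{r}=\binom{N+1}{r+1}$ applied with $r=k-1$ and $N=j-2-m$. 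With this identity in hand the induction closes and \eqref{W4} follows.
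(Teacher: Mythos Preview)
Your argument is correct and follows the paper's approach exactly: derive the triangular differential inequality $P_j'\le -\sigma P_j+\omega\sum_{i<j}P_i$ from the PDE and the support hypothesis on $r$, then iterate via Gronwall. Your inductive step with the hockey-stick identity in fact supplies more detail than the paper, which only works out $j=0,1,2$ explicitly and leaves \eqref{W4} to induction; note only the small slip in your intermediate inequality (the non-vanishing condition should read $\hat l-\delta> l_m-(j+1)\delta$, which is what yields $\hat l>l_m-j\delta$).
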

\begin{proof}
From (\ref{lin-eq1}) and (\ref{line-eq2}), for $p(a,l,0)  \in D(\mathcal{A}_\Phi)$,
\begin{align}\label{W5}
P_0^{\prime}(t) & = \int^{l_m}_{l_m-\delta}  \int^{a_m}_0 \frac{\partial p}{\partial t} (a,l,t) \ud a  \, \ud l \\ \nonumber
& = \int^{l_m}_{l_m-\delta} \, \int^{a_m}_0  \bigg( - \frac{\partial p}{\partial a} (a,l,t) \, - \, (\beta(a,l) + \mu(a,l))p(a,l,t) \bigg)  \ud a  \, \ud l \\ \nonumber
& = 
\int^{l_m}_{l_m-\delta} \,  \bigg( p(0,l,t) - p(a_m,l,t) - \int^{a_m}_0\, (\beta(a,l) + \mu(a,l))p(a,l,t)  \ud a  \bigg) \, \ud l \\ \nonumber
& \leq \int^{l_m}_{l_m-\delta} \, \bigg(2 \, \int^{l_m}_0 r(l,\hat{l})\int^{a_m}_0  \, \beta(a,\hat{l}) p(a,\hat{l},t) \ud a  \, \ud \hat{l} \bigg) \ud l
- \, \sigma P_0(t) \\ \nonumber 
& = - \, \sigma P_0(t)  \nonumber,
\end{align}
(since $r(l,\hat{l}) \equiv 0$ for $l_m -\delta \leq l \leq l_m$, $0 \leq \hat{l} \leq l_m$ - see Figure \ref{Figure-delta}(a)).
Thus,  $P_0(t) \leq e^{-\sigma t} P_0(0)$.

A similar calculation to (\ref{W5}) yields
\begin{align}\label{W6}
P_1^{\prime}(t) 
& \leq \int^{l_m - \delta}_{l_m - 2 \delta} \, \bigg(2 \, \int^{l_m}_0 r(l,\hat{l})\int^{a_m}_0  \, \beta(a,\hat{l}) p(a,\hat{l},t) \ud a  \, \ud \hat{l} \bigg) \ud l
- \, \sigma P_1(t) \\ \nonumber 
& = \int^{l_m - \delta}_{l_m - 2 \delta} \, \bigg(2 \, \int^{l_m}_{l + \delta} r(l,\hat{l})\int^{a_m}_0  \, \beta(a,\hat{l}) p(a,\hat{l},t) \ud a  \, \ud \hat{l} \bigg) \ud l
- \, \sigma P_1(t) \\ \nonumber 
& = \int^{l_m}_{l_m - \delta} \, \bigg(2 \, \int^{\hat{l} - \delta}_{l_m - 2 \delta } r(l,\hat{l}) \ud l \int^{a_m}_0  \, \beta(a,\hat{l}) \, p(a,\hat{l},t) \ud a  \,  \bigg) \, \ud \hat{l} 
- \, \sigma P_1(t) \\ \nonumber 
& \leq 2 \, \delta \, r_{max} \, \beta_{max} \, \int^{l_m}_{l_m - \delta} \, \bigg( \int^{a_m}_0  \, p(a,\hat{l},t) \ud a  \,  \bigg) \, \ud \hat{l} 
- \, \sigma P_1(t) \\ \nonumber 
& = \omega \, P_0(t) \, -\sigma P_1(t)
\end{align}
(since $r(l,\hat{l}) \equiv 0$ for $l_m - 2 \delta \leq l \leq l_m - \delta$, $0 \leq \hat{l} \leq l + \delta$ - see Figure \ref{Figure-delta}(b)).
Thus,  $P_1(t)$ satisfies (\ref{W2}).

A similar calculation to (\ref{W6}) yields
\begin{align}\label{W7}
P_2^{\prime}(t) 
& \leq \int^{l_m - 2 \delta}_{l_m - 3 \delta} \, \bigg(2 \, \int^{l_m}_0 r(l,\hat{l})\int^{a_m}_0  \, \beta(a,\hat{l}) p(a,\hat{l},t) \ud a  \, \ud \hat{l} \bigg) \ud l
- \, \sigma P_2(t) \\ \nonumber 
& = \int^{l_m - 2 \delta}_{l_m - 3 \delta} \, \bigg(2 \, \int^{l_m}_{l + \delta} r(l,\hat{l})\int^{a_m}_0  \, \beta(a,\hat{l}) p(a,\hat{l},t) \ud a  \, \ud \hat{l} \bigg) \ud l
- \, \sigma P_2(t) \\ \nonumber 
& = \int^{l_m - \delta}_{l_m - 2 \delta} \, \bigg(2 \, \int^{\hat{l} - \delta}_{l_m - 3 \delta } r(l,\hat{l}) \ud l \int^{a_m}_0  \, \beta(a,\hat{l}) \, p(a,\hat{l},t) \ud a  \,  \bigg) \, \ud \hat{l} \\ \nonumber
& \hspace{.5in}  + \, \int^{l_m}_{l_m - \delta} \, \bigg(2 \, \int^{l_m- 2 \delta}_{l_m - 3 \delta } r(l,\hat{l}) \ud l \int^{a_m}_0  \, \beta(a,\hat{l}) \, p(a,\hat{l},t) \ud a  \,  \bigg) \, \ud \hat{l} 
- \, \sigma P_2(t) \\ \nonumber  
& \leq 2 \, \delta \, r_{max} \, \beta_{max} \, \bigg(P_1(t) \, + \,P_0(t) \bigg) - \, \sigma P_2(t) \\ \nonumber 
& \leq \omega \,  e^{-\sigma t} \bigg(P_1(0) + \omega  \, t \, P_0(0) \, + \, P_0(0) \bigg) \, -\sigma P_2(t),
\end{align}
since $r(l,\hat{l}) \equiv 0$ for $l_m - 3 \delta \leq l \leq l_m - 2 \delta$, $0 \leq \hat{l} \leq l + \delta$ - see Figure \ref{Figure-delta}(c).
Thus,  $P_2(t)$ satisfies (\ref{W3}). The general case (\ref{W4}) is proved by induction following similar steps as above.
\end{proof}

\begin{figure}[h!]
\centering
\includegraphics[width=5.1in,height=1.7in]{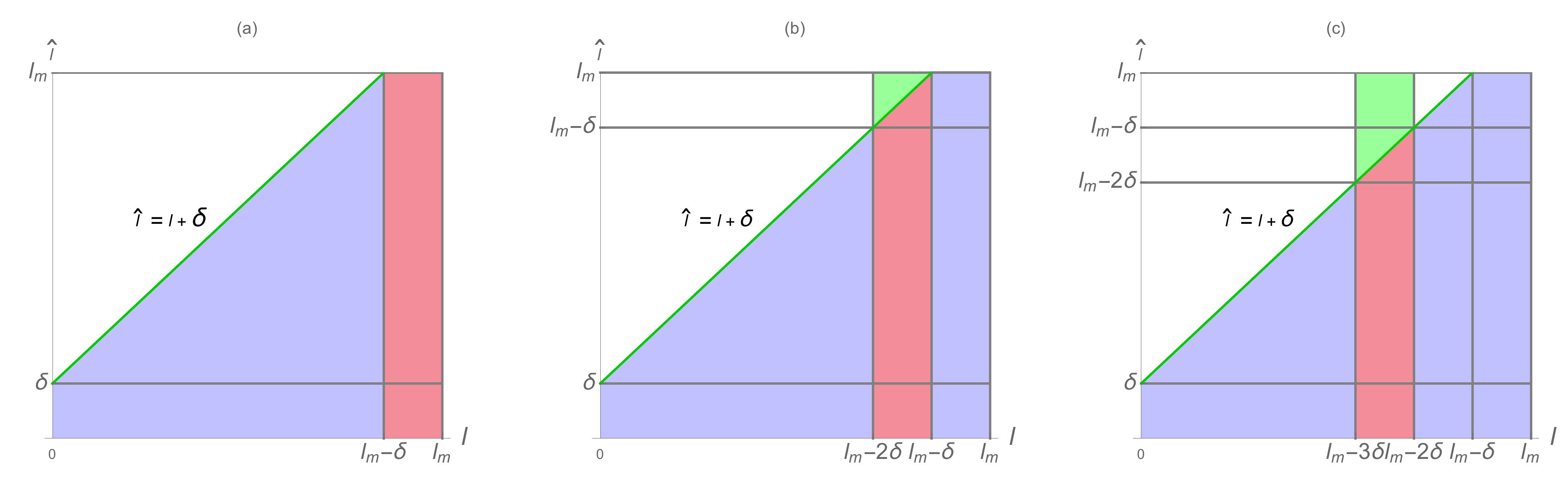}
\caption{The hypotheses on $r(l,\hat{l})$ in Proposition \ref{propW} allow no self-renewal of any telomere length class. $r(l,\hat{l}) \equiv 0$ in the blue regions below the graph $\hat{l} = l + \delta$. $r(l,\hat{l}) \equiv 0$ in the red regions corresponding to classes (a) $P_0(t)$, (b) $P_1(t)$, (c) $P_2(t)$. The green regions correspond to divisions from longer to shorter classes.}.
\label{Figure-delta}
\end{figure}

We next provide sufficient conditions for a class of cells of longest telomeres to have sufficient self-renewal capacity for them to attain proliferative immortality. We assume that the division rate $\beta(a,l)$ and mortality rate $\mu(a,l)$ are constant in this class, and the fraction of dividing cells in this class with self-renewal is sufficiently large to overcome the loss of cells due to division and mortality.
\begin{proposition}\label{propWW}
Let $\delta\in(0,l_m)$, such that $\beta(a,l)\equiv\beta_1>0$ and $\mu(a,l)\equiv\mu_1>0$, for
$l_m - \delta  \leq l \leq l_m$ and $0 \leq a \leq a_m$.  Let $l_m < 2$, and  let $r_1$ be such that
$\int_{l_m - \delta}^{l_m} r(l,\hat{l}) \ud l > r_1$ for $l_m - \delta  \leq \hat{l} \leq l_m$,
and assume that
\begin{equation}\label{r1eq}
(2 r_1 - 1) \beta_1 \geq  \mu_1.
\end{equation}
Let $p(a,l,t)$ be the solution of (\ref{lin-eq1})-(\ref{lin-eq3}), such that $p_0 \in 
D(\mathcal{A}_\Phi)$,  $p_0 \geq 0$. There exists a constant 
$C > 0$ (depending on $p_0$) such that
\begin{equation}\label{W8}
\int_0^{a_m} \int_{l_m - \delta}^{l_m} p(a,l,t) \ud l \, \ud a \geq  C \, e^{(2r_1 \beta_1  - \beta_1 - \mu_1)t}, \quad t \geq 0.
\end{equation}
\end{proposition}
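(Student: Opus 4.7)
The plan is to derive a Gronwall-type lower bound for the mass in the top telomere class,
\[Q(t) := \int_0^{a_m}\!\int_{l_m-\delta}^{l_m} p(a,l,t)\,\ud l\,\ud a,\]
mirroring the computations in Proposition \ref{propW}, but with the direction of the inequalities reversed.

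The first step is to differentiate under the integral. Since $\beta\equiv\beta_1$ and $\mu\equiv\mu_1$ are constant on $[0,a_m]\times[l_m-\delta,l_m]$, equation \eqref{lin-eq1} yields
\[Q'(t) = \int_{l_m-\delta}^{l_m}\bigl(p(0,l,t)-p(a_m,l,t)\bigr)\,\ud l \;-\; (\beta_1+\mu_1)\,Q(t).\]
The second step is to lower bound the boundary birth term using \eqref{line-eq2}. Swapping the order of integration and restricting the $\hat l$-integral to the top strip, where $\beta(a,\hat l)=\beta_1$, one obtains
\begin{align*}
\int_{l_m-\delta}^{l_m} p(0,l,t)\,\ud l
&= 2\int_0^{l_m}\Bigl(\int_{l_m-\delta}^{l_m} r(l,\hat l)\,\ud l\Bigr)\int_0^{a_m}\beta(a,\hat l)\,p(a,\hat l,t)\,\ud a\,\ud\hat l \\
&\ge 2\, r_1\, \beta_1\, Q(t),
\end{align*}
where the inequality uses the self-renewal hypothesis $\int_{l_m-\delta}^{l_m} r(l,\hat l)\,\ud l > r_1$ for $\hat l\in[l_m-\delta,l_m]$ and positivity of the integrand over the remaining $\hat l$-range.

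Combining these two steps and discarding the nonnegative outflow at $a=a_m$ gives
\[Q'(t) \ge (2r_1\beta_1 - \beta_1 - \mu_1)\,Q(t),\]
whose integration yields $Q(t) \ge C\,e^{(2r_1\beta_1-\beta_1-\mu_1)t}$ with $C:=Q(0)>0$ for $p_0\in D(\mathcal{A}_\Phi)$ non-trivial on the top strip. The assumption $(2r_1-1)\beta_1\ge\mu_1$ is exactly what makes the exponent nonnegative, so the conclusion expresses persistence (or exponential growth) of the top self-renewing class.

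The main obstacle is the outflow term $-\int_{l_m-\delta}^{l_m} p(a_m,l,t)\,\ud l\le 0$, which cannot simply be discarded in a lower-bound argument. The cleanest resolution is to appeal to the standing convention $\beta(a_m,\cdot)\equiv 0$ from Section 2 and its associated biological interpretation that no individual survives past the maximum age, so the trace $p(a_m,\cdot,t)$ vanishes. A more robust alternative is to work instead with the weighted mass
$\tilde Q(t) := \int_0^{a_m}\!\int_{l_m-\delta}^{l_m}(a_m-a)\,p(a,l,t)\,\ud l\,\ud a$,
for which integration by parts eliminates the $a_m$-trace at the cost of a lower-order perturbation that can be absorbed into the constant $C$.
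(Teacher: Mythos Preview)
You have correctly identified the obstacle, but neither of your proposed resolutions actually closes the gap.

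\emph{Resolution 1 fails.} The standing convention $\beta(a_m,\cdot)\equiv 0$ says only that cells of maximal age do not \emph{divide}; it does not force the density $p(a_m,\cdot,t)$ to vanish. The latter would follow from the \emph{alternative} hypothesis in Section~2, namely $\int_0^{a_m}\mu(a,\cdot)\,\ud a=\infty$, but the present proposition explicitly assumes $\mu\equiv\mu_1<\infty$ on the top strip, so cells do survive to age $a_m$ with positive density. (Note also that the proposition's hypothesis $\beta\equiv\beta_1>0$ on $[0,a_m]\times[l_m-\delta,l_m]$ is in tension with $\beta(a_m,\cdot)\equiv 0$; in any case neither reading yields $p(a_m,\cdot,t)=0$.)

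\emph{Resolution 2 gives the wrong exponent.} With the weight $w(a)=a_m-a$ one computes
\[
\tilde Q'(t)=a_m\!\int_{l_m-\delta}^{l_m}\!p(0,l,t)\,\ud l-Q(t)-(\beta_1+\mu_1)\tilde Q(t)
\ge (2r_1\beta_1 a_m-1)\,Q(t)-(\beta_1+\mu_1)\tilde Q(t),
\]
and since only $Q\ge \tilde Q/a_m$ is available, the resulting Gronwall exponent is $2r_1\beta_1-\beta_1-\mu_1-1/a_m$, strictly smaller than the one claimed in \eqref{W8}. The defect $1/a_m$ is a rate, not a constant, and cannot be absorbed into $C$.

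The paper avoids the $a=a_m$ boundary loss altogether by \emph{not} integrating out the age variable. It works with $P_0(a,t)=\int_{l_m-\delta}^{l_m}p(a,l,t)\,\ud l$, which satisfies an age-structured equation with constant coefficients and the boundary inequality $P_0(0,t)\ge 2r_1\beta_1\int_0^{a_m}P_0(a,t)\,\ud a$. Via the method of characteristics this becomes a renewal-type inequality for $P_0(0,t)$ in which the outflow at $a_m$ never appears; a comparison with the solution $\hat P$ of the problem with equality in the boundary condition then yields $P_0(0,t)\ge \hat P(0,t)\ge C'e^{(2r_1\beta_1-\beta_1-\mu_1)t}$, and a final use of the characteristics formula recovers \eqref{W8} with the sharp exponent. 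The essential idea you are missing is this passage through the renewal equation, which is what decouples the lower bound from the uncontrolled trace at $a_m$.
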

\noindent
(Note that (\ref{scale1eq}) and (\ref{r1eq}) imply that $1/2 < r_1  \leq 1/l_m$, which automatically holds if $l_m$ is normalised to 1. The hypothesis on $r$ means that the fraction of daughter cells with telomere length between $l_m - \delta$ and $l_m$ produced by mother cells in this class is greater than $1/2$.)

\begin{proof}
Let $P_0(a,t) = \int_{l_m - \delta}^{l_m} p(a,l,t) \, \ud l, \, 0 \leq a \leq a_m, \, t \geq 0$.
From (\ref{lin-eq1})-(\ref{lin-eq3})
\begin{equation}\label{W9} 
\frac{\partial P_0}{\partial t}(a,t)+\frac{\partial P_0}{\partial a}(a,t) 
 = - (\beta_1+\mu_1) \, P_0(a,t), 
\end{equation}
\begin{align}
P_0(0,t) & = \int^{l_m}_{l_m-\delta} \, \bigg(2 \, \int^{l_m}_0 r(l,\hat{l})\int^{a_m}_0  \, \beta(a,\hat{l}) p(a,\hat{l},t) \ud a  \, \ud \hat{l} \bigg) \ud l  \nonumber \\
& \geq \int_{l_m - \delta}^{l_m} \, \bigg(2 \, \int^{l_m}_{l_m - \delta} r(l,\hat{l}) \, \ud l \bigg) \, \bigg(\int^{a_m}_0  \, \beta_1 \, p(a,\hat{l},t) \ud a  \bigg) \ud \hat{l}  \nonumber \\
& \geq 2 r_1 \beta_1 \, \int_0^{a_m}  \, P_0(a,t) \, \ud a. \label{W10} 
\end{align}
From the method of characteristics (see e.g. \cite{W})
\begin{equation}\label{P1char}
P_0(a,t) = 
\begin{cases}
P_0(a-t,0) e^{-t \, (\beta_1 + \mu_1)}, \quad a \geq t \\ 
P_0(0,t-a) e^{-a \, (\beta_1 + \mu_1)}, \quad a < t
\end{cases}.
\end{equation}
Let $\hat{P}(a,t)$ satisfy 
\begin{align}\label{W11}
\frac{\partial \hat{P}}{\partial t}(a,t)+\frac{\partial \hat{P}}{\partial a}(a,t) 
& = - (\beta_1+\mu_1) \, \hat{P}(a,t), \\ \nonumber
\hat{P}(0,t) & = 2   r_1 \beta_1 \, \int_0^{a_m} \hat{P}(a,t) \ud a, \\ \nonumber
\hat{P}(a,0) & = P_0(a,0). 
\end{align}
Again from the method of characteristics, $\hat{P}(a,t)$ satisfies
\begin{equation}\label{Phatchar}
\hat{P}(a,t) = 
\begin{cases}
\hat{P}(a-t,0) e^{-t \, (\beta_1 + \mu_1)},  \quad a \geq t \\ 
\hat{P}(0,t-a) e^{-a \, (\beta_1 + \mu_1)}, \quad a < t
\end{cases}.
\end{equation}
From  (\ref{W11}), for $t \geq 0$,
\begin{align}
\hat{P}(0,t) & = 2 \, r_1 \, \beta_1 \, \int_0^{a_m} \hat{P}(a,t) \ud a \nonumber \\
& \, = \, 2 \, r_1 \, \beta_1 \bigg( \int_0^t \, \hat{P}(0,t-a) e^{-a(\beta_1 + \mu_1)} \ud a 
 \nonumber \\
& \hspace{1.0in} + \,  \int_t^{a_m}\, \hat{P}(a-t,0) e^{-t(\beta_1 + \mu_1)} \ud a \bigg)
 \nonumber\\
 & \, \geq \, 2 \, r_1 \, \beta_1 \int_0^t \, \hat{P}(0,t-a) e^{-a(\beta_1 + \mu_1)} \ud a 
 \nonumber \\
& = 2 \, r_1 \, \beta_1  \int_0^t \, \hat{P}(0,b) e^{-(t-b) (\beta_1 + \mu_1)} \ud b, \label{W12}
\end{align}
where the last equality is obtained by a change of the variable of integration.
Let $w(t) = e^{(\beta_1 + \mu_1) t} \, \hat{P}(0,t)$. Then (\ref{W12}) implies
$$
w(t)  \geq 2 r_1  \beta_1 \int_0^t w(a) \ud a,
$$
which implies 
$$ \frac{\ud}{\ud t}\bigg(e^{- 2 r_1 \beta_1 t} \, \int_0^t \, w(a) \ud a \bigg) \geq 0.
$$
Integrating from $a_m$ to $t$ to obtain
$$
e^{- 2 r_1 \beta_1 t} \int_0^t w(a) \ud a \geq e^{- 2 r_1 \beta_1 a_m} \int_0^{ a_m}w(a) \ud a,
$$
which implies
$$
w(t) \geq 2 r_1 \beta_1 e^{2 r_1 \beta_1 (t - a_m)} \int_0 ^{a_m} w(a) \ud a.
$$
Then
\begin{align}\label{W13}
\hat{P}(0,t)  \geq 2 r_1 \beta_1 e^{(2 r_1 \beta_1 - \beta_1 - \mu_1) t} \, e^{- 2 r_1 \beta_1 a_m} \int_0^{a_m} e^{(\beta_1 + \mu_1) a} \hat{P}(0,a) \ud a.
\end{align}
Let $Q(a,t) = P_0(a,t) -  \hat{P}(a,t), \, 0 \leq a \leq a_m, \, t \geq 0$.
Then for $t \geq 0$, from (\ref{W10}) and  (\ref{W12})  
\begin{align}
Q(0,t) & = P_0(0,t)  - \hat{P}(0,t)  \geq  2r_1 \beta_1 \int_0^{a_m} (P_0(a,t) - \hat{P}(a,t) ) \ud a \nonumber \\
& \, = \, 2 \, r_1 \, \beta_1 \bigg( \int_0^t \, (P_0(0,t-a) - \hat{P}(0,t-a)) e^{-a (\beta_1 + \mu_1)} \ud a \nonumber \\
& \hspace{1.0in} + \,  \int_t^{a_m}\, (P_0(a-t,0) - \hat{P}(a-t,0)) e^{- t (\beta_1 + \mu_1)} \ud a \bigg)
\nonumber \\
& = \, 2 \, r_1 \, \beta_1  \int_0^t \, (P_0(0,a) - \hat{P}(0,a)) e^{- (t - a) (\beta_1 + \mu_1)} \ud a
 \nonumber \\
& = 2 \, r_1 \, \beta_1  \int_0^t \, Q(0,a) e^{- (t - a) (\beta_1 + \mu_1)} \ud a. \label{W14}
\end{align}
Then (\ref{W14}) implies
$$
\frac{\ud}{\ud t}\bigg(e^{- 2 r_1 \beta_1 t} \int_0^t Q(0,a) \ud a \bigg) \geq 0,
$$
which integrating from $0$ to $t$ implies $Q(0,t) \geq 0 \, \Longleftrightarrow P_0(0,t) \geq \hat{P}(0,t)$.
Then (\ref{W8}) follows from  (\ref{W13}) and (\ref{P1char}).
\end{proof}

\begin{remark}
We note that a similar result can be established for other classes of cells with self-renewal capability,  of telomere length in a specific $\delta$-interval. Such cell populations can arise from a single mutant cell, which generates more daughter cells with this mutation than competitor cells, and thus expand in a clone within the tumor cell population.
\end{remark}

\section{Incorporating crowding effect}

Next we introduce a nonlinearity in model \eqref{lin-eq1}-\eqref{lin-eq3} by incorporating crow\-ding/competition effects via imposing extra mortality pressure on cells. We follow the same approach as employed previously in \cite{Arino2,DSVBW2007,DVBW2007,Webb1986} for similar cell population mo\-dels. Our equations now read
\begin{align}
\frac{\partial p}{\partial t}(a,l,t)+ & \frac{\partial p}{\partial a}(a,l,t)=-(\beta(a,l)+\mu(a,l))p(a,l,t)-F\left(P(t)\right)p(a,l,t), \label{nonlineq1} \\
p(0,l,t) & =2\int_0^{l_m}r(l,\hat{l})\int_0^{a_m}\beta(a,\hat{l})p(a,\hat{l},t)\,\ud a\,\ud \hat{l},  \label{nonlineq2} \\
p(a,l,0)& =p_0(a,l), \quad P(t)=\int_0^{l_m}\int_0^{a_m}p(a,l,t)\,\ud a\,\ud l. \label{nonlineq3}
\end{align}
In equation \eqref{nonlineq1} $F$ is a non-negative function, and it also satisfies some smoothness assumptions. For example it suffices to assume that $F$ is continuously differentiable. Equation \eqref{nonlineq1} is still semilinear, hence  global existence of solutions can be established at least if $F$ is Lipschitz continuous via integrating along characteristics and using a contraction mapping principle, see for example \cite{CP,W}, where this approach was in fact applied to establish global existence of solutions of a similar model. 
In the simplest case, when $F$ is a linear function, our model \eqref{nonlineq1}-\eqref{nonlineq3} fits into the exact framework studied in \cite{DVBW2007}, if some additional hypotheses are fulfilled. In particular, if we assume that there exists a $\lambda_0\in\mathbb{R}$ and a bounded linear operator $P_0$ (a projection onto the finite-dimensional eigenspace corresponding to the spectral bound of $\mathcal{A}_\Phi+\mathcal{B}$), such that $\displaystyle\lim_{t\to\infty}e^{-\lambda_0\,t}\,\mathcal{T}_\Phi(t)\,x=P_0\,x$ and $F(F_0(P_0(x)))>0$ holds; then the nonlinear semigroup governing \eqref{nonlineq1}-\eqref{nonlineq3} is given explicitly as 
\begin{equation}
\mathcal{S}_\Phi(t)\,x=\frac{\mathcal{T}_\Phi(t)\,x}{1+\int_0^tF(F_0(\mathcal{T}_\Phi(s)\,x))\,\ud s}.
\end{equation}
In the formulas above we introduced the notation $F_0$ for the bounded linear integral operator $F_0 \,  u=\int_0^{l_m}\int_0^{a_m} u(a,l)\,\ud a\,\ud l$, with domain $D(F_0)=\mathcal{X}$. 

Note that some asymptotic results for more general classes of nonlinearities were already obtained in the earlier paper \cite{Webb1986}. In particular for a continuous, non-negative and monotone increasing function $F$ it was proven, under the same assumptions as above, that $\lambda_0<0$ implies that solutions corresponding to initial conditions $x_0\in \mathcal{X}_+\cap D(\mathcal{A}_\Phi+\mathcal{B})$, such that $P_0\, x_0\in\mathcal{X}_+\setminus 0$ holds, tend to $0$. On the other hand, if $\lambda_0\ge 0$, then solutions corresponding to initial conditions as above, tend to $\frac{F^{-1}(\lambda_0)P_0\,x_0}{F_0(P_0\,x_0)}$. 

Here we are mainly interested how the asymptotic behaviour of the nonlinear model changes compared to the linear model \eqref{lin-eq1}-\eqref{lin-eq3}, for a general nonlinear function $F$. In particular, we are interested if the linear model with exponential growth (accounting for the telomere restoring capacity of cancer cells) can be stabilised by adding competition effects, i.e. by incorporating competition induced nonlinearity. Here, under stabilisation, we mean that the addition of the nonlinear mortality term into a linear model whose solutions grow exponentially, leads to a model with a (unique) asymptotically stable positive steady state.

The existence and uniqueness of the positive steady state of the nonlinear model \eqref{nonlineq1}-\eqref{nonlineq3} is established using the techniques we developed in the previous section to study the asymptotic behaviour of the linear model. In particular solving equation \eqref{nonlineq1} for a positive steady state we obtain
\begin{equation*}
p_*(a,l)=p_*(0,l)\exp\left\{-\int_0^a(\beta(r,l)+\mu(r,l))\,\ud r\right\}e^{-aF(P_*)}.
\end{equation*}
Next we substitute this solution into the boundary condition \eqref{nonlineq2} to arrive at an integral equation of the form
\begin{align*}
p_*(0,l)= 2\int_0^{l_m} & r(l,\hat{l})p_*(0,\hat{l}) \\ 
& \times\int_0^{a_m}\beta(a,\hat{l})\exp\left\{-\int_0^a(\beta(r,\hat{l})+\mu(r,\hat{l}))\,\ud r\right\}e^{-aF(P_*)}\,\ud a\,\ud \hat{l}.
\end{align*}
Hence, we define a parametrised family of bounded positive integral operators $\mathcal{Q}_P$ for $P\in [0,\infty)$ as follows
\begin{equation}
\mathcal{Q}_P\,x=2\int_0^{l_m}r(l,\hat{l})x(\hat{l})\int_0^{a_m}\beta(a,\hat{l})\exp\left\{-\int_0^a(\beta(r,\hat{l})+\mu(r,\hat{l}))\,\ud r\right\}e^{-aF(P)}\,\ud a\,\ud \hat{l},
\end{equation}
with domain D$(\mathcal{Q}_P)=L^1(0,l_m)$. Note that for any fixed $P\in [0,\infty)$ we have $\mathcal{Q}_P=\mathcal{O}_{F(P)}$. 
Hence if there exists a $P_*$ such that the integral operator $\mathcal{Q}_{P_*}$ has eigenvalue $1$ with a corresponding normalised positive eigenvector $x$, then 
\begin{equation}
p_*(a,l)=c\,x(l)\exp\left\{-\int_0^a(\beta(r,l)+\mu(r,l))\,\ud r\right\}e^{-aF(P_*)},
\end{equation} 
determines a positive steady state of the nonlinear model \eqref{nonlineq1}-\eqref{nonlineq3}, where the constant $c$ is chosen such that $\int_0^{l_m}\int_0^{a_m}p_*(a,l)\,\ud a\,\ud l=P_*$ holds. Making use of the results we established in the previous section about the spectral radius of the operator $\mathcal{O}$, we summarize our finding in the following proposition.
\begin{proposition}
Assume that $r$ satisfies condition \eqref{sg-irred}. Then, if $F$ is a monotone increasing function, and either of the following conditions holds
\begin{equation}
2\min_{\hat{l}}K(\hat{l},F(0))\int_0^{l_m} r(l,\hat{l})\,\ud l>1,\quad 2\,\min_{l}\int_0^{l_m}r(l,\hat{l})K(\hat{l},F(0))\,\ud \hat{l}>1,\label{mincond}
\end{equation}
the nonlinear model \eqref{nonlineq1}-\eqref{nonlineq3} has a unique strictly positive steady state. 
On the other hand, if $F$ is a monotone decreasing function, then either of the following conditions
\begin{equation}
2\max_{\hat{l}}K(\hat{l},F(0))\int_0^{l_m} r(l,\hat{l})\,\ud l<1,\quad 2\,\max_{l}\int_0^{l_m}r(l,\hat{l})K(\hat{l},F(0))\,\ud \hat{l}<1,\label{maxcond}
\end{equation}
implies that the nonlinear model \eqref{nonlineq1}-\eqref{nonlineq3} has a unique strictly positive steady state. 
\end{proposition}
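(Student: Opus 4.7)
The plan is to reduce existence and uniqueness of a positive steady state of \eqref{nonlineq1}--\eqref{nonlineq3} to a one-parameter scalar problem for the spectral radius of the parametrised integral operators $\mathcal{O}_\lambda$ studied in the previous section. Integrating \eqref{nonlineq1} along characteristics and inserting into the boundary condition \eqref{nonlineq2} (as in the computation carried out just before the statement) shows that a positive stationary solution $p_*$ with total mass $P_*$ corresponds precisely to a positive $L^1$-eigenvector of $\mathcal{Q}_{P_*}=\mathcal{O}_{F(P_*)}$ with eigenvalue $1$. Under \eqref{r-reg} the operator $\mathcal{O}_\lambda$ is compact for every $\lambda\geq 0$, by the same Fr\'echet--Kolmogorov argument used in the proof of Proposition \ref{prop}; under \eqref{sg-irred} it is moreover irreducible. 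The Krein--Rutman theorem together with de Pagter's strict positivity result then guarantees that $\mathcal{Q}_{P_*}$ has $1$ as an eigenvalue with a positive eigenvector if and only if its spectral radius $r(\mathcal{O}_{F(P_*)})$ equals $1$, in which case the eigenvector is strictly positive and unique up to a positive multiple. The remaining scalar $c$ relating $p_*(0,\cdot)$ to the eigenvector is uniquely pinned down by the closure relation $\int_0^{l_m}\int_0^{a_m} p_*(a,l)\,\ud a\,\ud l=P_*$, and so the whole question boils down to finding a unique $P_*>0$ solving $g(P_*)=1$, where $g(P):=r(\mathcal{O}_{F(P)})$.

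For monotonicity and uniqueness of $P_*$, I would invoke the fact, recalled just before Proposition \ref{prop}, that $\lambda\mapsto r(\mathcal{O}_\lambda)$ is continuous and strictly decreasing on $[0,\infty)$. Combined with the strict monotonicity of $F$, this makes $g$ continuous and strictly monotone: strictly decreasing when $F$ is increasing, strictly increasing when $F$ is decreasing. Uniqueness of $P_*$ is then immediate, and via the unique normalisation of $c$ it transfers to uniqueness of the stationary $p_*$ itself.

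Existence reduces to an intermediate value argument anchored by the spectral radius bounds \eqref{radius-est} and \eqref{r-est3}. In the monotone increasing case, each of the hypotheses in \eqref{mincond} is exactly the assertion that one of the lower bounds in those estimates, evaluated at $\lambda=F(0)$, strictly exceeds $1$, so $g(0)>1$; since $K(l,\lambda)\to 0$ uniformly in $l$ as $\lambda\to\infty$ (the $\exp\{-\lambda a\}$ factor in its definition), the strictly decreasing $g$ must drop below $1$ for $F(P)$ large, and the IVT produces a unique $P_*>0$. Symmetrically, in the monotone decreasing case the upper bounds in \eqref{radius-est} and \eqref{r-est3} at $\lambda=F(0)$ force $g(0)<1$, and the strictly increasing $g$ tends to $r(\mathcal{O}_{F(\infty)})$; under the natural additional assumption that $F(P)\to 0$ and that the linear model is supercritical in the sense $r(\mathcal{O}_0)>1$, the IVT again supplies the unique $P_*$. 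I expect the main obstacle to be exactly this last step in the decreasing case: the stated hypothesis \eqref{maxcond} constrains only $g(0)$, and some asymptotic information on $F$ is genuinely needed in order to make $g$ cross the level $1$ from below.
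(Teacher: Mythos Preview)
Your argument is essentially the paper's own: the proposition is stated there without a separate proof, as a summary of the preceding discussion, and that discussion is exactly the reduction you describe --- the steady-state equation for $p_*$ becomes the eigenvalue problem $\mathcal{Q}_{P_*}x=x$ with $\mathcal{Q}_{P_*}=\mathcal{O}_{F(P_*)}$, irreducibility via \eqref{sg-irred} and compactness via \eqref{r-reg} give a unique strictly positive eigenvector at the spectral radius, and the continuity and strict monotonicity of $\lambda\mapsto r(\mathcal{O}_\lambda)$ combined with the estimates \eqref{radius-est}, \eqref{r-est3} handle the scalar equation $r(\mathcal{O}_{F(P)})=1$.

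Your closing caveat about the monotone decreasing case is well taken and goes beyond what the paper spells out. The hypothesis \eqref{maxcond} indeed only forces $g(0)=r(\mathcal{O}_{F(0)})<1$, and since $g$ is then increasing one needs $\sup_{P>0} g(P)>1$ to cross the level $1$. Because the paper assumes $F\ge 0$, a monotone decreasing $F$ has a finite limit $F(\infty)\ge 0$, so the missing ingredient is precisely $r(\mathcal{O}_{F(\infty)})>1$; in the natural situation $F(\infty)=0$ this is the supercriticality $r(\mathcal{O}_0)>1$ you name. The paper does not make this explicit, so your identification of the gap is accurate rather than a defect in your reconstruction.
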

Note that if $F$ is not monotone, for example if the competition induced mortality exhibits an All\'{e}e-type effect, then we can still establish the existence of the positive steady state by using the estimates \eqref{radius-est}-\eqref{r-est3} for the spectral radius, but we may loose uniqueness in general, and the dynamic behaviour will certainly be more complex.

Next we investigate the stability of the steady states of the nonlinear model \eqref{nonlineq1}-\eqref{nonlineq3}. Note that our model is a semilinear one (moreover the nonlinear operator is differentiable), hence stability can be studied indeed via linearisation, see e.g. \cite{Pruss,W}. To this end note that the linearisation of equation \eqref{nonlineq1} around a steady state $p_*$ reads
\begin{equation}
\frac{\partial u}{\partial t}(a,l,t)+\frac{\partial u}{\partial a}(a,l,t)=-(\beta(a,l)+\mu(a,l)+F(P_*))u(a,l,t)-F'(P_*)p_*(a,l)U(t), \label{linearised}
\end{equation}
where we set $U(t)=\int_0^{l_m}\int_0^{a_m}u(a,l,t)\,\ud a\,\ud l$. The linearised model \eqref{linearised}-\eqref{nonlineq2}-\eqref{nonlineq3} is also governed by a strongly continuous semigroup, since equation \eqref{linearised} is just a bounded perturbation (at least when $F$ is $C^1$) of the linear equation \eqref{lin-eq1}. Moreover, the growth bound of the semigroup is also determined by the spectral bound of its generator. Also note that if $F'(P_*)<0$, for example if $F$ is monotone decreasing, then the semigroup governing the linearised problem \eqref{linearised}-\eqref{nonlineq2}-\eqref{nonlineq3} is positive, too. On the other hand, if $F'(P_*)>0$, then the governing semigroup cannot shown to be positive, and stability might be lost via Hopf-bifurcation. The linearised equation \eqref{linearised} leads the following eigenvalue problem 
\begin{equation}
-u'-(\beta+\mu+F(P_*))u-F'(P_*)\,p_*\overline{U}=\lambda\, u,\quad u(0)=\Phi(u),\label{n-eigv1}
\end{equation}
where we set $\overline{U}=\int_0^{l_m}\int_0^{a_m} u(a,l)\,\ud a\,\ud l$. The solution of the first equation above is
\begin{align}
u(a,l)= & \exp\left\{-\int_0^a\left(\beta(r,l)+\mu(r,l)+F(P_*)+\lambda\right)\,\ud r\right\} \nonumber \\
 & \times \left(u(0,l)-\overline{U}\int_0^a\frac{p_*(r,l)F'(P_*)}{\exp\left\{-\int_0^r\left(\beta(s,l)+\mu(s,l)+F(P_*)+\lambda\right)\,\ud s\right\}}\,\ud r\right).
\end{align}
Imposing the second equation of \eqref{n-eigv1} leads to the following inhomogeneous integral equation
\begin{align}
u(0,l)=  & 2\int_0^{l_m}r(l,\hat{l})u(0,\hat{l})\int_0^{a_m}\beta(a,\hat{l})\Pi(a,\hat{l},\lambda)\,\ud a \,\ud \hat{l} \nonumber \\
 & -2\,\overline{U}\int_0^{l_m}r(l,\hat{l})\int_0^{a_m}\beta(a,\hat{l})\Pi(a,\hat{l},\lambda)\int_0^a\frac{p_*(r,\hat{l})F'(P_*)}{\Pi(r,\hat{l},\lambda)}\,\ud r\,\ud a \,\ud \hat{l}, \label{lin-int-eq}
\end{align}
where we introduced the notation
\begin{equation*}
\Pi(a,l,\lambda)=\exp\left\{-\int_0^a\left(\beta(r,l)+\mu(r,l)+F(P_*)+\lambda\right)\,\ud r\right\}.
\end{equation*}
Hence $\lambda\in\mathbb{C}$ is an eigenvalue of the linearised operator, if and only if the inhomogeneous integral equation \eqref{lin-int-eq} has a non-trivial solution $u(0,\cdot)$. As we can see the information pertaining $\lambda$ is rather implicit. However, in case of the extinction steady state $p_*\equiv 0$, equation \eqref{lin-int-eq} reduces to an integral equation of the form of \eqref{eigv-3}, and therefore the stability of the extinction steady state is established using the techniques we developed in the previous section. Also note that in this case the governing linear semigroup is positive. In particular, we have the following result.
\begin{proposition}
Either of the conditions in \eqref{maxcond} imply that the extinction steady state $p_*\equiv 0$ is asymptotically stable. On the other hand, either of the conditions in \eqref{mincond} imply that the steady state $p_*\equiv 0$ is unstable.
\end{proposition}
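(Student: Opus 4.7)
The plan is to linearise \eqref{nonlineq1}-\eqref{nonlineq3} about the extinction equilibrium $p_* \equiv 0$ and then read off stability from the framework of Section 3. Setting $p_* = 0$ and $P_* = 0$ in \eqref{linearised} kills the coupling term $F'(P_*) p_*(a,l) U(t)$, and the linearisation reduces to the linear equation \eqref{lin-eq1} with $\mu$ replaced by $\mu + F(0)$, together with the original boundary condition and initial datum. The corresponding generator is $\mathcal{A}_\Phi + \mathcal{B} - F(0)\,\mathcal{I}$, a bounded multiplicative perturbation of the generator from Section 2, which therefore generates a positive strongly continuous semigroup.

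Next I would rerun the eigenvalue analysis of Section 3 verbatim for this shifted generator. The derivations leading from \eqref{eigv-1} to \eqref{eigv-4} pass through with $\mu$ replaced by $\mu + F(0)$ and produce the same characteristic equation but with $K(\hat{l},\lambda)$ replaced by $K(\hat{l},\lambda + F(0))$. Consequently $\lambda$ is an eigenvalue of the linearised generator iff $\mathcal{O}_{\lambda + F(0)}$ has eigenvalue $1$. Using the continuity and strict monotone decrease of $\lambda \mapsto r(\mathcal{O}_\lambda)$ recalled in Section 3, the spectral bound $s_*$ of the linearised generator satisfies $\mathrm{sgn}(s_*) = \mathrm{sgn}\bigl(r(\mathcal{O}_{F(0)}) - 1\bigr)$.

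The estimates \eqref{radius-est} and \eqref{r-est3} now apply verbatim to $\mathcal{O}_{F(0)}$ with $K(\hat{l},0)$ replaced throughout by $K(\hat{l}, F(0))$, since their derivations depended only on positivity and irreducibility of the integral kernel. Hence either condition in \eqref{maxcond} forces $r(\mathcal{O}_{F(0)}) < 1$ and so $s_* < 0$, while either condition in \eqref{mincond} forces $r(\mathcal{O}_{F(0)}) > 1$ and so $s_* > 0$. The conclusion then follows from the principle of linearised stability for semilinear equations (cf.\ \cite{Pruss,W}): a negative linearised spectral bound yields local asymptotic stability of $p_* \equiv 0$, whereas a positive one produces a positive eigenvector along which perturbations grow exponentially, ruling out stability.

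The main obstacle I foresee lies in translating the sign of $s_*$ into genuine decay or growth of the linearised semigroup. Since the semigroup is not eventually compact, as emphasised after Proposition \ref{prop}, one cannot directly appeal to the spectral mapping theorem. The remedy is exactly Proposition \ref{prop}: because $\mathcal{A}_0$ generates a nilpotent semigroup, the essential spectrum of the (perturbed and shifted) generator is trivial, so its entire spectrum consists of eigenvalues that map to eigenvalues of the semigroup via $\lambda \mapsto e^{t\lambda}$. This is what converts the sign of $s_*$ into the exponential decay or growth rate needed to close the argument.
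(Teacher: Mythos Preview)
Your proposal is correct and follows exactly the route the paper itself sketches in the paragraph preceding the proposition: at $p_*\equiv 0$ the coupling term in \eqref{linearised} vanishes, the linearised problem reduces to \eqref{lin-eq1}--\eqref{lin-eq3} with $\mu$ replaced by $\mu+F(0)$, and the eigenvalue equation \eqref{lin-int-eq} collapses to \eqref{eigv-3} with $K(\hat l,\lambda)$ replaced by $K(\hat l,\lambda+F(0))$; the estimates \eqref{radius-est} and \eqref{r-est3} then become precisely \eqref{maxcond} and \eqref{mincond}. The paper gives no separate proof beyond this remark, so you have simply spelled out the intended argument, including the appeal to Proposition~\ref{prop} for the spectral mapping issue.
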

\begin{remark}
Note the connection between the existence of a non-trivial steady state and the stability of the trivial one. In particular, for a monotone increasing $F$, either of conditions in \eqref{mincond} implies that a unique strictly positive steady state exists and the trivial one is unstable. On the other hand, if $F$ is monotone decreasing, either of conditions in \eqref{maxcond} implies that a unique strictly positive steady state exists and the trivial steady state is locally asymptotically stable. Moreover, in this case, since the governing linear semigroup is positive, we may anticipate that the unique strictly positive steady state is unstable. In fact we are going to prove this later on.
\end{remark}

Next we study the stability of the positive steady state. First we note that in the special but interesting case, when $F'(P_*)=0$, the eigenvalue problem \eqref{lin-int-eq} (now a homogeneous integral equation) simply reads 
\begin{align}
u(0,l)=  & 2\int_0^{l_m}r(l,\hat{l})u(0,\hat{l})\int_0^{a_m}\beta(a,\hat{l})\Pi(a,\hat{l},\lambda)\,\ud a \,\ud \hat{l}.  \label{new-int-eq}
\end{align}
That is, the eigenvalue problem \eqref{new-int-eq} above can be written as 
\begin{equation*}
\mathcal{O}_{(\lambda+F(P_*))}\,x=1\cdot x,\quad \lambda\in\mathbb{C},\quad x\not\equiv 0,
\end{equation*}
where $\mathcal{O}$ is defined earlier in \eqref{int-op}. Note that, since the semigroup governing the linearised equation is positive, the spectral bound belongs to its spectrum, i.e. it is a dominant real eigenvalue, which determines the asymptotic behaviour.  Also note that, the existence of the positive steady state $p_*$, with total population size $P_*$, requires that $\mathcal{O}_{F(P_*)}\,x=1\cdot x$, with a corresponding positive eigenvector $x$. In the case when $r$ satisfies \eqref{sg-irred}, i.e. the governing linearised semigroup is irreducible, the spectral radius of $\mathcal{O}_{F(P_*)}$ is the only eigenvalue with a corresponding positive (and strictly positive) eigenvector. It is also shown, utilising Proposition A.2 from \cite{AB} that the function $\lambda\to r(\mathcal{O}_{(\lambda+F(P_*))})$ is strictly monotone decreasing, for $\lambda\in [0,\infty)$. Hence we conclude that $r(\mathcal{O}_{F(P_*)})=1$, and therefore $0$ is the dominant eigenvalue of the linearised operator. In this case the governing linear semigroup is strongly stable, but not uniformly exponentially stable, see e.g. \cite[Ch.V]{NAG}. 

Next we consider the general case. We already noted that the information about the spectral values contained in \eqref{lin-int-eq} is rather implicit. Moreover, we note that a biologically relevant and meaningful function $F$ would be of a logistic type, i.e. $F(0)=0$ and $F$ (strictly) monotone increasing. In this case, as we noted before, the governing linear semigroup is not positive, hence we cannot guarantee the existence of a dominant real eigenvalue of the generator, i.e. a dominant real solution $\lambda$ of \eqref{lin-int-eq}. Hence we use a direct approach to establish stability. This approach was employed previously for some simpler structured population models in \cite{FH4,FH8}. The main advantage of this approach is that it does not rely on the positivity of the governing linear semigroup. We now introduce a notation for the generator of the semigroup governing the linearised problem. Let
\begin{align*}
\mathcal{C}_\Phi\, u & =-\frac{\partial u}{\partial a}-(\beta+\mu+F(P_*))u-F'(P_*)\,p_*\,\overline{U}, \\ 
D(\mathcal{C}_\Phi) & =\left\{u\in W^{1,1}\left((0,a_m);L^1(0,l_m)\right)\,|\,u(0,\cdot)=\Phi(u)\right\}=D(\mathcal{A}_\Phi).
\end{align*}
\begin{proposition}\label{stab-prop}
The stationary solution $p_*$ of model \eqref{nonlineq1}-\eqref{nonlineq3} is asymptotically stable if 
\begin{equation}
\mu(\cdot,\circ)+\beta(\cdot,\circ)+F(P_*)> |F'(P_*)|\,P_*+2\beta(\cdot,\circ)\int_0^{l_m}r(\hat{l},\circ)\,\ud \hat{l}\label{stability}
\end{equation}
holds.
\end{proposition}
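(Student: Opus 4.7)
The plan is to prove asymptotic stability by showing that the $L^1$-norm of any mild solution of the linearised problem \eqref{linearised}, \eqref{nonlineq2}, \eqref{nonlineq3} decays exponentially, and then to invoke the standard principle of linearised stability for semilinear equations (see e.g.\ \cite{Pruss,W}). Working directly with the equation rather than with the implicit characterisation \eqref{lin-int-eq} is crucial: when $F'(P_*)$ may be positive the governing semigroup need not be positive, so standard spectral arguments based on a dominant real eigenvalue are unavailable, whereas the hypothesis \eqref{stability} is pointwise in $(a,l)$ and so is perfectly suited to an $L^1$-energy estimate.

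Let $u$ solve \eqref{linearised} with $u(0,l,t)=\Phi(u)(l)$, and set $W(t)=\int_0^{a_m}\!\int_0^{l_m}|u(a,l,t)|\,\ud l\,\ud a$ and $U(t)=\int_0^{a_m}\!\int_0^{l_m}u\,\ud l\,\ud a$. A regularisation of $|u|$ by $\sqrt{u^2+\varepsilon^2}-\varepsilon$ followed by $\varepsilon\to 0$ turns equation \eqref{linearised} into the differential inequality
\[
\partial_t|u|+\partial_a|u|\leq -(\beta(a,l)+\mu(a,l)+F(P_*))|u|+|F'(P_*)|\,p_*(a,l)\,|U(t)|.
\]
Integrating over $(a,l)\in(0,a_m)\times(0,l_m)$, discarding the non-positive flux at $a=a_m$, and using $|U(t)|\leq W(t)$ together with $\int\!\int p_*\,\ud a\,\ud l=P_*$ produces
\[
\frac{\ud W}{\ud t}\leq \int_0^{l_m}|u(0,l,t)|\,\ud l-\int\!\int(\beta+\mu+F(P_*))|u|\,\ud a\,\ud l+|F'(P_*)|\,P_*\,W(t).
\]

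The nonlocal boundary condition \eqref{nonlineq2} yields $|u(0,l,t)|\leq 2\int\!\int r(l,\hat l)\beta(a,\hat l)|u(a,\hat l)|\,\ud a\,\ud\hat l$; integrating in $l$ and relabelling $\hat l\to l$ gives
\[
\int_0^{l_m}|u(0,l,t)|\,\ud l\leq 2\int_0^{a_m}\!\!\int_0^{l_m}\!\beta(a,l)\Bigl(\int_0^{l_m}\!r(\hat l,l)\,\ud\hat l\Bigr)|u(a,l)|\,\ud l\,\ud a.
\]
Combining this with the previous estimate gives
\[
\frac{\ud W}{\ud t}\leq -\int_0^{a_m}\!\!\int_0^{l_m}\!\Bigl[(\beta+\mu+F(P_*))-2\beta(a,l)\!\!\int_0^{l_m}\!r(\hat l,l)\,\ud\hat l-|F'(P_*)|P_*\Bigr]|u|\,\ud l\,\ud a.
\]
By hypothesis \eqref{stability} the bracket is strictly positive, and by boundedness of the data on the compact $(a,l)$-rectangle it has a positive essential infimum $\varepsilon>0$. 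Hence $W'(t)\leq -\varepsilon W(t)$, and Gr\"onwall's lemma yields $W(t)\leq e^{-\varepsilon t}W(0)$. This exponential decay of the linearised solution in $\X$, together with the linearised stability principle, delivers asymptotic stability of $p_*$.

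The main obstacle is the passage from the equation for $u$ to the differential inequality for $|u|$: because $u$ may change sign and $F'(P_*)$ may be of either sign, one cannot rely on any positivity of the linearised semigroup. The $\sqrt{u^2+\varepsilon^2}$-regularisation argument is the cleanest route, but it must first be carried out for classical solutions $u(t)\in D(\mathcal{C}_\Phi)$ and then extended to mild solutions by density; the remaining estimates based on the nonlocal boundary operator $\Phi$ are routine, and the pointwise structure of \eqref{stability} ensures that the Gr\"onwall step closes with a uniform exponential rate.
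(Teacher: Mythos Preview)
Your argument is correct and yields the same key pointwise estimate as the paper, but the route is genuinely different. The paper works at the level of the generator: it shows that for a suitable $\kappa>0$ the operator $\mathcal{C}_\Phi+\kappa\,\mathcal{I}$ is dissipative in $\mathcal{X}$ by taking the resolvent-type equation $(\mathcal{I}-\lambda(\mathcal{C}_\Phi+\kappa))x=f$, partitioning $(0,a_m)$ into the intervals on which $x(\cdot,l)$ has fixed sign, multiplying by $\mathrm{sgn}_a\,x$, integrating, and summing. Together with a verification of the range condition, the Lumer--Phillips theorem then gives $\|\mathcal{S}_\Phi(t)\|\le e^{-\kappa t}$. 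You instead work at the level of solutions: a $\sqrt{u^2+\varepsilon^2}$-regularisation turns the linearised PDE into a differential inequality for $|u|$, and integrating over $(a,l)$ produces an ODE inequality for $W(t)=\|u(\cdot,\cdot,t)\|_{\mathcal{X}}$ that Gr\"onwall closes. The two are dual faces of the same estimate; the paper's approach avoids the regularisation/density step but must check the range condition, while yours bypasses Lumer--Phillips at the cost of justifying the passage from classical to mild solutions. Both hinge on the same bracket $(\beta+\mu+F(P_*))-2\beta\int_0^{l_m}r(\hat l,\cdot)\,\ud\hat l-|F'(P_*)|P_*$ being uniformly positive, which is precisely \eqref{stability}.
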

\begin{proof}
Our goal is to show that there exists a $\kappa>0$ such that the operator $\mathcal{C}_\Phi+\kappa\,\mathcal{I}$ is dissipative ($\mathcal{I}$ stands for the identity). That is, we need to show that there exists a $\kappa>0$, such that we have 
\begin{equation*}
\left|\left|\left(\mathcal{I}-\lambda\,(\mathcal{C}_\Phi+\kappa)\right)w\right|\right|\ge ||w||,\quad \forall\,\lambda>0,\,w\in D(\mathcal{C}_\Phi).
\end{equation*} 
Then, invoking the Lumer-Phillips Theorem \cite[II.3]{NAG}, we obtain that the semigroup generated by $\mathcal{C}_\Phi$ satisfies $||\mathcal{S}_\Phi||\le e^{-\kappa t}$, for $t\ge 0$, i.e. it is exponentially stable. 
To this end, assume that for a given $f\in \mathcal{X}$ and $\kappa\in\mathbb{R}$; $x\in D(\mathcal{C}_\Phi)$ satisfies the equation
\begin{equation}
(\mathcal{I}-\lambda\,(\mathcal{C}_\Phi+\kappa))\, x=f. \label{reseq1}
\end{equation}
Then we are going to show that if condition \eqref{stability} holds, then in fact there exists a $\kappa>0$ small enough, such that $||x||\le ||f||$ holds, for all $\lambda>0$. The main idea, as in \cite{FH4,FH8}, is to divide the interval $(0,a_m)$ into a countable union of subintervals, now for any fixed $l$; on each of which the function $x(\cdot,l)$ is either positive or negative. That is we write 
\begin{equation*}
(0,a_m)=\displaystyle\bigcup_{i} (a_i,b_i)=
\left\{\displaystyle\bigcup_{\hat{i}}(a_{\hat{i}},b_{\hat{i}})\right\}\bigcup\left\{\displaystyle\bigcup_{\bar{i}}(a_{\bar{i}},b_{\bar{i}})\right\},
\end{equation*} 
such that $x(\cdot,l)$ is positive almost everywhere on each subinterval $(a_{\hat{i}},b_{\hat{i}})$, and negative almost everywhere on $(a_{\bar{i}},b_{\bar{i}})$, respectively; and vanishes at each end point except when $a_i=0$ and $b_i=a_m$. 
Over each of the subintervals we multiply equation \eqref{reseq1} by sgn$_a\, x$, we integrate, and we sum up the integrals. Then for any fixed $l$ we obtain the estimate
\begin{align}
\int_0^{a_m}|x(a,l)|\,\ud a \le & \int_0^{a_m}|f(a,l)|\,\ud a+\lambda|x(0,l)|-\lambda F'(P_*)\overline{X}\int_0^{a_m}sgn_a(x)p_*(a,l)\,\ud a \nonumber \\
& +\lambda\int_0^{a_m}(\kappa-(\beta(a,l)+\mu(a,l)+F(P_*))|x(a,l)|\,\ud a.
\end{align}
This, together with  
\begin{align*}
\int_0^{l_m}|x(0,l)|\,\ud l= & \int_0^{l_m}\left|2\int_0^{l_m}r(l,\hat{l})\int_0^{a_m}\beta(a,\hat{l}x(a,\hat{l})\,\ud \,\ud \hat{l}\right|\,\ud l \nonumber \\
\le & \int_0^{l_m}\int_0^{a_m}|x(a,\hat{l})|2\beta(a,\hat{l})\int_0^{l_m}r(l,\hat{l})\,\ud l\,\ud a\,\ud \hat{l},
\end{align*}
yields
\begin{align}
& ||x||\le ||f||+\lambda\int_0^{l_m}\int_0^{a_m}|x(a,l)| \nonumber \\ 
&\times\left(\kappa-(\beta(a,l)+\mu(a,l)+F(P_*))+P_*|F'(P_*)|+2\beta(a,l)\int_0^{l_m}r(\hat{l},l)\,\ud \hat{l}\right)\,\ud a\,\ud l.
\end{align}
Hence if condition \eqref{stability} holds we can indeed choose a $\kappa>0$ such that the solution $x$ of \eqref{reseq1} satisfies $||x||\le ||f||$, for all $\lambda>0$.

To verify that the range condition holds true (see \cite[II.3]{NAG}), note that for any $f\in\mathcal{X}$, the solution of the equation $-\mathcal{A}_\Phi\, u=f-\lambda\,u$ is
\begin{equation}\label{dissip}
u(a,l)=e^{-\lambda a}\left(\Phi(u)+\int_0^a e^{\lambda r}f(r,l)\,\ud r\right),
\end{equation}
where
\begin{equation}\label{dissipative}
\Phi(u)=2\int_0^{l_m}r(l,\hat{l})\Phi(u)\int_0^{a_m}\beta(a,\hat{l})e^{-\lambda a}\,\ud a\,\ud\hat{l}+\Phi\left(\int_0^\cdot e^{-\lambda(\cdot-r)}f(r,\circ)\,\ud r\right).
\end{equation}
Since $\Phi$ is bounded, it follows from the smoothness assumptions we imposed on $\beta$ and $r$ (in particular their boundedness), that for any $f\in\mathcal{X}$ and $\lambda>0$ large enough, the right hand side of \eqref{dissipative} belongs to $L^1(0,l_m)$. 
Therefore, $u$ given by \eqref{dissip} clearly satisfies $u\in D(\mathcal{A}_\Phi)$, and since $\mathcal{C}_\Phi$ is a bounded perturbation of $\mathcal{A}_\Phi$, the range condition holds true, and the proof is completed.
\end{proof}

\begin{remark}
Note that at the extinction steady state the stability condition \eqref{stability} reads
\begin{equation}\label{stability2}
\mu(a,l)+\beta(a,l)+F(0)\ge 2\beta(a,l)\int_0^{l_m}r(\hat{l},l)\,\ud \hat{l}, \quad a \in [0,a_m], \quad l \in [0.l_m],
\end{equation}
This is a biologically relevant and natural condition, as it simply says that if mortality and cell division together is higher  than recruitment of new cells into the population, then the population dies out. Note the connection between (\ref{stability2}) and (\ref{r1eq}), which demonstrates the dichotomy between the stability of the trivial steady state and the exponential growth of a class of cells with longest telomere length.
\end{remark}

Finally, we establish an instability result, for the case when the semigroup governing the linearised equation is positive and irreducible. 
\begin{proposition}\label{instab-prop}
Assume that condition \eqref{sg-irred} holds. Then $F'(P_*)<0$ implies that the positive steady state $p_*$ is unstable.
\end{proposition}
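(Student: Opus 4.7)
The plan is to exhibit $p_*$ itself as an eigenfunction of the linearised generator $\mathcal{C}_\Phi$ with strictly positive eigenvalue, which will immediately force instability.

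First I would check that $p_*\in D(\mathcal{C}_\Phi)=D(\mathcal{A}_\Phi)$: the required $W^{1,1}$ regularity is clear from the explicit representation of $p_*$ derived in the previous section, and the boundary identity $p_*(0,\cdot)=\Phi(p_*)$ is exactly the steady-state form of \eqref{nonlineq2}. Next, I would evaluate $\mathcal{C}_\Phi p_*$ directly. Since $p_*$ satisfies the steady-state profile equation $\partial_a p_*=-(\beta+\mu+F(P_*))\,p_*$, the transport and multiplication terms in $\mathcal{C}_\Phi$ cancel when applied to $p_*$; and substituting $u=p_*$ collapses $\overline{U}$ to $\int_0^{l_m}\int_0^{a_m} p_*\,\ud a\,\ud l=P_*$. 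Only the nonlocal feedback term survives, giving
\begin{equation*}
\mathcal{C}_\Phi p_*\;=\;-F'(P_*)\,P_*\,p_*.
\end{equation*}
Hence $\lambda_*:=-F'(P_*)\,P_*$ is an eigenvalue of $\mathcal{C}_\Phi$ with eigenfunction $p_*$. The irreducibility hypothesis \eqref{sg-irred} guarantees that $p_*$ is strictly positive (as discussed around the operator $\mathcal{O}_{F(P_*)}$ earlier in the section), so $P_*>0$; combined with $F'(P_*)<0$, this yields $\lambda_*>0$.

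To conclude, I would invoke the principle of linearised stability for semilinear evolution equations with $C^1$ nonlinearity (as cited just before \eqref{linearised}): the existence of a spectral value of $\mathcal{C}_\Phi$ with strictly positive real part forces instability of $p_*$. As already noted in the text, $F'(P_*)<0$ makes the linearised semigroup positive, and combined with \eqref{sg-irred} a Krein-Rutman/de Pagter type argument would in fact identify $\lambda_*$ as the spectral bound of $\mathcal{C}_\Phi$, but this sharper statement is not needed for instability. I anticipate no serious technical obstacle: the decisive observation is simply that $p_*$ is itself an unstable eigendirection of the linearisation, with instability rate $|F'(P_*)|\,P_*$ quantifying the destabilising effect of the negative crowding feedback evaluated at the equilibrium population.
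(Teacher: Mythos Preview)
Your argument is correct and is in fact more direct than the paper's own proof. The key computation---that the steady-state profile equation $\partial_a p_* = -(\beta+\mu+F(P_*))p_*$ makes the local terms in $\mathcal{C}_\Phi p_*$ cancel, leaving only the nonlocal feedback $-F'(P_*)P_*\,p_*$---is clean, and the domain check is as you describe.

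The paper takes a different route: it splits $\mathcal{C}_\Phi=\mathcal{C}^0_\Phi+\mathcal{G}$, where $\mathcal{C}^0_\Phi$ is the linearised operator with the feedback term removed and $\mathcal{G}\,u=-F'(P_*)\,p_*\,\overline{U}$. The existence of the positive steady state is equivalent to $s(\mathcal{C}^0_\Phi)=0$, and since $F'(P_*)<0$ makes $\mathcal{G}$ a bounded positive operator, a strict monotonicity result for the spectral bound under positive perturbations (Proposition~A.2 in \cite{AB}) gives $s(\mathcal{C}_\Phi)=s(\mathcal{C}^0_\Phi+\mathcal{G})>0$. This is more abstract and does not identify any eigenvalue explicitly; your approach, by contrast, exhibits the unstable eigenpair $(\lambda_*,p_*)$ with $\lambda_*=-F'(P_*)P_*$ and hence gives a quantitative instability rate. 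The paper's argument has the mild advantage of fitting into the positive-operator machinery used throughout and of not requiring one to spot the eigenfunction, but your proof is shorter and more informative.
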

\begin{proof}
We define the operators $\mathcal{C}^0_\Phi$ and $\mathcal{G}$ as follows
\begin{align*}
\mathcal{C}^0_\Phi\, u & =-\frac{\partial u}{\partial a}-(\beta+\mu+F(P_*))u,\quad \mathcal{G}\, u=-F'(P_*)\,p_*\int_0^{l_m}\int_0^{l_m} u(a,l)\,\ud a\,\ud l, \\ 
D(\mathcal{C}^0_\Phi) & =D(\mathcal{C}_\Phi),\quad D(\mathcal{G})=\mathcal{X}.
\end{align*}
Note that if \eqref{sg-irred} holds, then $\mathcal{C}^0_\Phi$ generates a positive irreducible semigroup; moreover, its spectrum is determined by the eigenvalues of its generator $\mathcal{C}^0_\Phi$, which are of finite algebraic multiplicity. Also note that the existence of a (strictly)  positive steady state $p_*$ is characterised by $r(\mathcal{Q}_{P_*})=1$, which is  equivalent to $s(\mathcal{C}^0_\Phi)=0$. 
Since $\mathcal{G}$ is positive and bounded, applying Proposition A.2 from \cite{AB}, we obtain that 
\begin{equation}
0=s(\mathcal{C}^0_\Phi)<s(\mathcal{C}^0_\Phi+\mathcal{G}),
\end{equation}
hence the steady state $p_*$ is unstable.
\end{proof}

\section{Examples and simulation results}

We present three examples to illustrate the asymptotic behaviour of solutions of the CE model. The first example, which is linear, assumes no self-renewal (i.e. telomere restoring capacity) of any cell, and shows an extinction of the cell population. The second example, also linear, allows self-renewal of a large fraction of cells, and shows exponential growth of the cell population. The third example is a nonlinear version of the second example, and shows population growth with stabilization of the total cell count and the age and telomere length structure.
In all of the three examples the age and telomere variables are scaled with $a_m=6$ and $l_m =1$. In all three examples the initial population density is 
\begin{equation*}
p(a,l,0)=1000 \,   l\times\max\{a \, (1-a), \, 0\},
\end{equation*} 
(Figure \ref{Figure1new}). In all three examples the division modulus $\beta \in  C^1(0,a_m)$  is 
\begin{align*}
\beta(a,l) = \begin{Bmatrix}
\max\left\{\beta_0 (a-1) e^{-6(a-1)}, 0\right\} \times \frac{\arctan\left(100 (l-0.5) +\frac{\pi}{2}\right)}{\pi}, \quad
\text{if}  \quad a \geq 1 \\
 0,  \quad \text{if} \quad 0 \leq a<1
\end{Bmatrix},
\end{align*}
where $\beta_0=13$ in Example 1, and $\beta_0=180$ in Examples 2 and 3.
(Figure \ref{Figure1new}). Cells which have telomere length below the critical value $0.5$ have greatly reduced capacity to divide. Note that $\beta(a,l)>0$  for $a>1$, so $\beta(a,l)$ does not vanish in $a$ identically for any $l > 0$. In the  examples the mortality modulus is the constant function  $\mu(a,l) \equiv \mu_0$, where 
$\mu_0 = 0.05$ in Example 1, and $\mu_0=0.3$ in Examples 2 and 3.

\vspace{0.1in}
{\bf Example 1.} No restoration of telomeres occurs in Example 1. The rule governing the telomere length of a daughter cell of length $l$ from a mother cell with length $\hat{l}$  is 
\begin{equation*}
r(l,\hat{l}) = \frac{G(l;\hat{l} - 0.2,0.05)}{0.8},
\end{equation*} 
where $G$ is a Gaussian distribution in $l$ with mean $\hat{l} - 0.2$ and standard deviation $0.05$, and $0.8$ is a normalization factor (Figure \ref{Figure2}). Note that $r(l,\hat{l})$ satisfies \eqref{sg-irred}. The interpretation of this rule is that all daughter cells have telomere length strictly less than their mother cells. The estimates for the spectral radius $r(\mathcal{O}_0)$ in (\ref{radius-est}) and (\ref{r-est3}) are graphed in Figure \ref{Figure3new}. The upper estimates are less than $1$ in both, which means the total population of cells extinguishes. The simulation of the linear model 
\eqref{lin-eq1}-\eqref{lin-eq3} for Example 1 is given in Figure \ref{Figure4new} and Figure \ref{Figure5new}. 

\begin{figure}[h!]
\centering
\includegraphics[width=5in,height=3in]{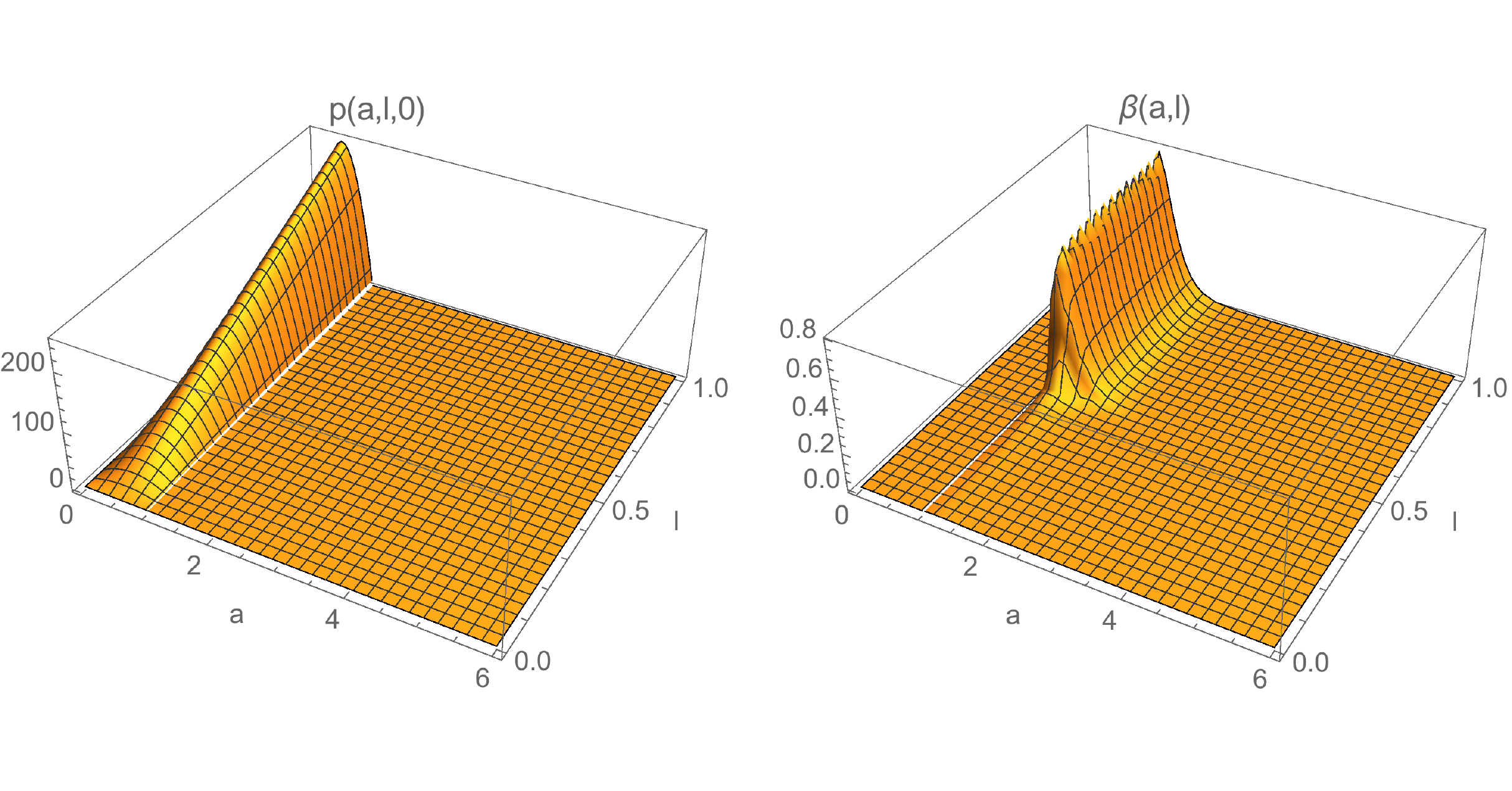}
\caption{In the left panel the initial age and telomere length distribution $p(a,l,0)$ of the cell population in all three examples is plotted. In the right panel the age and telomere length dependent division modulus $\beta(a,l)$ for Example 1 is plotted. No cell divides with $a \leq 1$. Cells with $l<0.5$ have greatly reduced capacity to divide.}
\label{Figure1new}
\end{figure}

\begin{figure}[h!]
\centering
\includegraphics[width=5.3in,height=3.1in]{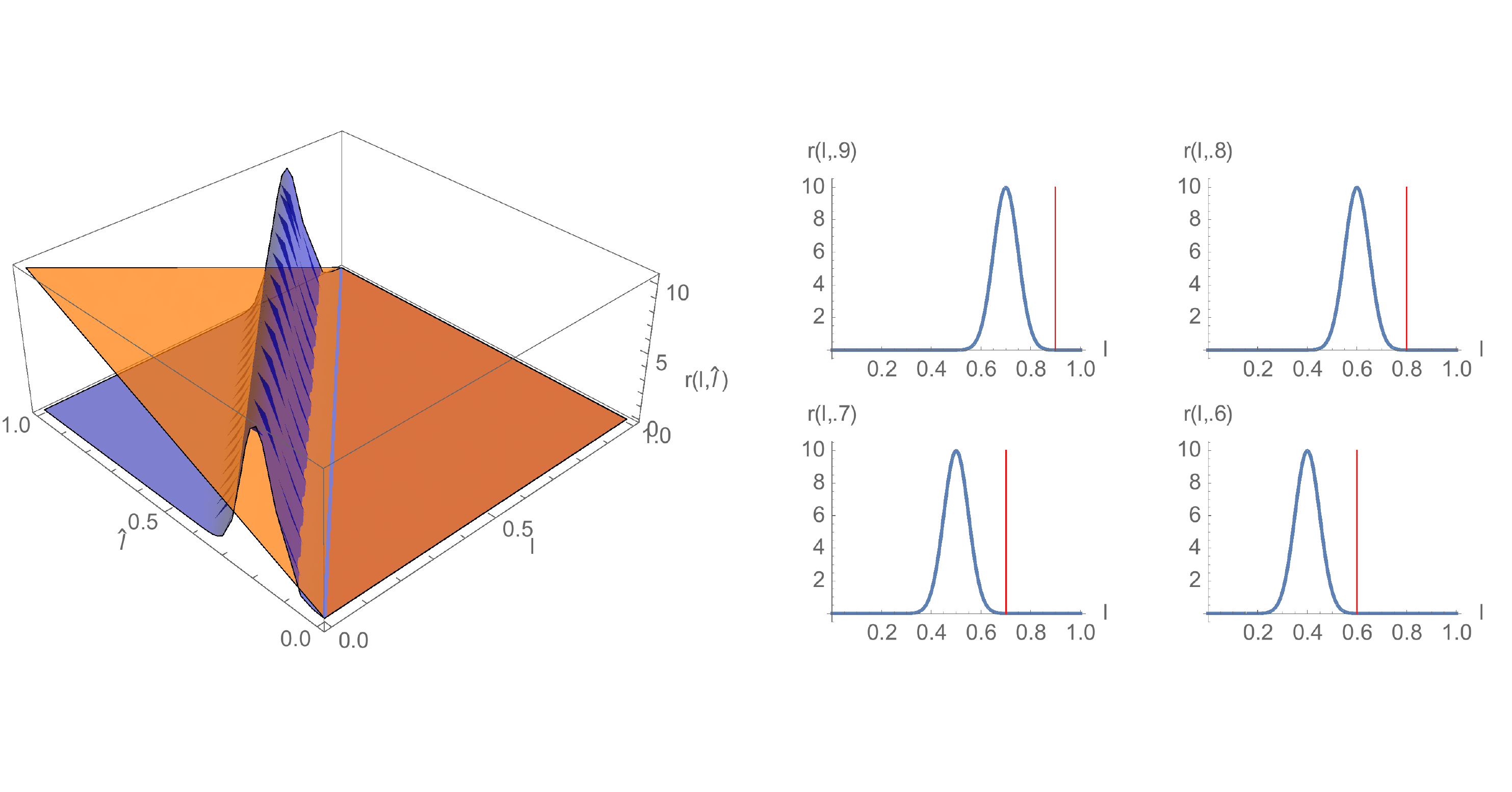}
\caption{In the left panel the blue surface is the graph of $r(l,\hat{l})$ in Example 1. The orange surface is  the graph of $10   \max\{\hat{l} - l,0\}$. In the right panel slices of the graph of $r(l,\hat{l})$ at the values $\hat{l} = 0.9, 0.8, 0.7$ and $0.6$ are plotted. The telomere lengths of daughter cells are all less than the mother cell.}
\label{Figure2}
\end{figure}

\begin{figure}[h!]
\centering
\includegraphics[width=5.3in]{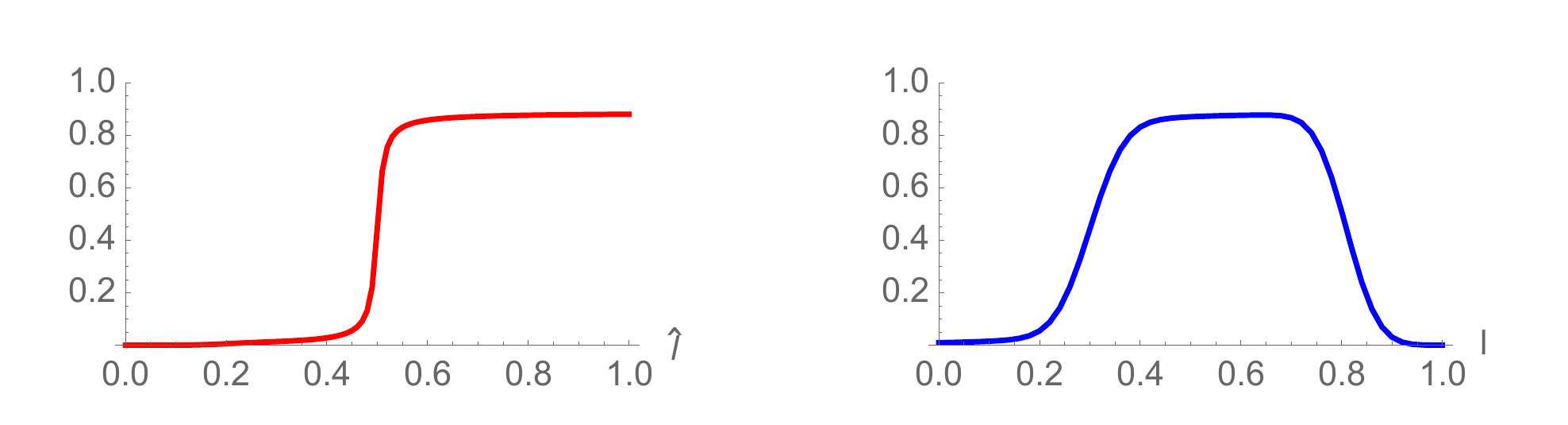}
\caption{The graphs of $2 \, K(\hat{l},0)\int_0^{l_m} r(l,\hat{l}) dl$ (red) and 
$2 \,\int_0^{l_m}r(l,\hat{l})K(\hat{l},0) d\hat{l}$ (blue) for Example 1. Since the maximum of each is less than $1$, (\ref{radius-est}) and (\ref{r-est3}) imply that the spectral radius $r(\mathcal{O}_0)<1$. Thus, the total population of cells converges to $0$.}
\label{Figure3new}
\end{figure}

\begin{figure}[h!]
\centering
\includegraphics[width=5in,height=3in]{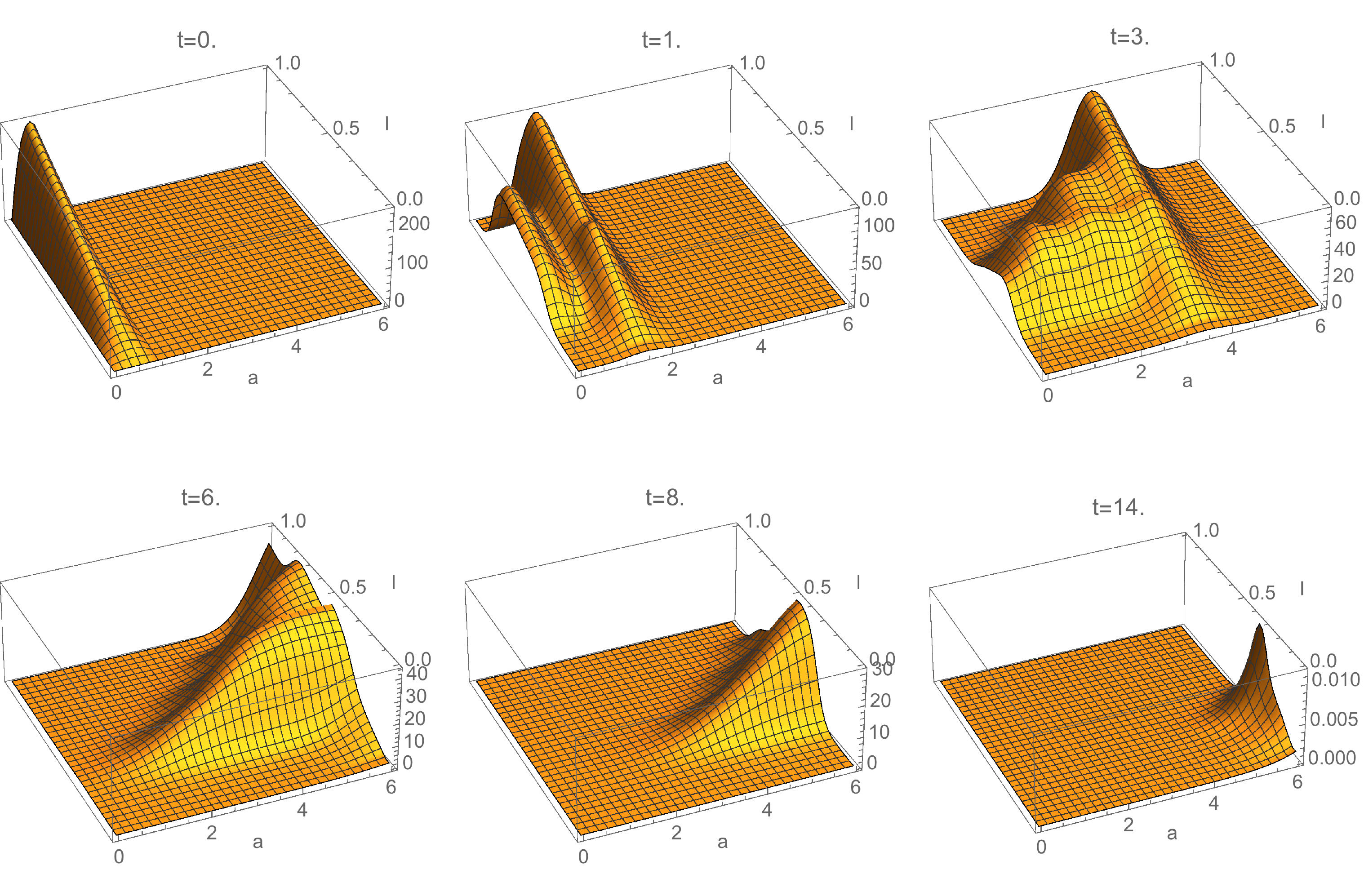}
\caption{The cell population densities $p(a,l,t)$ for Example 1 for time values $t = 0, 1, 3, 6, 8$, and $14$ are plotted.}
\label{Figure4new}
\end{figure}

\begin{figure}[h!]
\centering
\includegraphics[width=11cm]{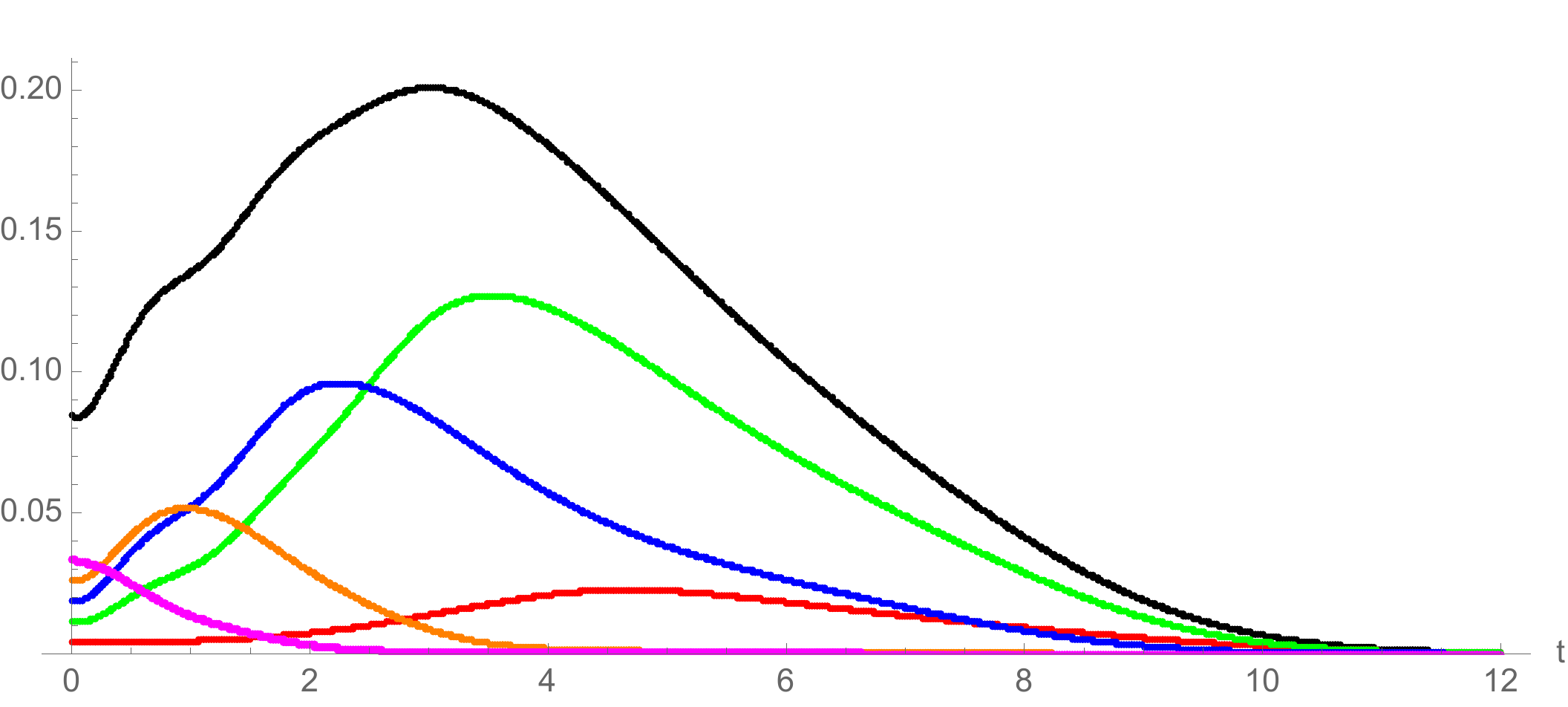}  
\caption{The time plot shows for Example 1 the subpopulations of cells as follows: black - total population;   magenta - telomere lengths between 0.8 and 1.0;  orange - between 0.6 and 0.8; blue - between 0.4 and 0.6; green - between 0.2 and 0.4; red - between 0.0 and 0.2;
all converging to $0$ as time advances.}
\label{Figure5new}
\end{figure}

{\bf Example 2.} In this example  restoration of telomeres occurs in cells with larger telomere lengths. The rule governing the telomere length of a daughter cell of length $l$ from a mother cell with length $\hat{l}$  is 
\begin{equation*}
r(l,\hat{l}) = \frac{G(l;m(\hat{l}),0.05)}{0.5},
\end{equation*} 
where $G$ is a Gaussian distribution in $l$ with mean $m(\hat{l}) = 1+2(\hat{l}-0.9)$ and standard deviation $0.05$; and $0.5$ is a normalization factor (Figure \ref{Figure5} and Figure \ref{Figure6}). Note that $r(l,\hat{l})$ satisfies \eqref{sg-irred}. The interpretation of this rule is that some daughter cells have telomere length equal or greater than  their mother cells, when the mother cells have longer lengths. The simulation of this telomere restoration rule for the linear model
\eqref{lin-eq1}-\eqref{lin-eq3} in Example 2 is given in Figure \ref{Figure8new}. The total population $P_2(t)$ of cells stabilizes in the age and telomere variables, but the total population size grows exponentially (Figure \ref{Figure9new}). A large fraction of cells have longer telomere lengths as the age-telomere length distribution stabilizes.

\vspace{0.4cm}
{\bf Example 3.} Example 3 is the nonlinear version \eqref{nonlineq1}-\eqref{nonlineq3} of Example 2, with the same parameters.  Additionally, the crowding term $F$ in Example 3 is defined as $$F(P)=\gamma P,$$ 
with $\gamma = 0.00001$. The population stabilizes both in structuring variables $a$ and $l$ (see Figure \ref{Figure10new}), as well as in time (Figure \ref{Figure11new}). The self-renewal properties of the longest telomere length cells in Example 3, combined with the nonlinear crowding effect, result in convergence to equilibrium .  As in Example 2, a large fraction of total  cells have longer telomere lengths at the stable steady state.
\begin{figure}[h!]
\centering
\includegraphics[width=4.1in]{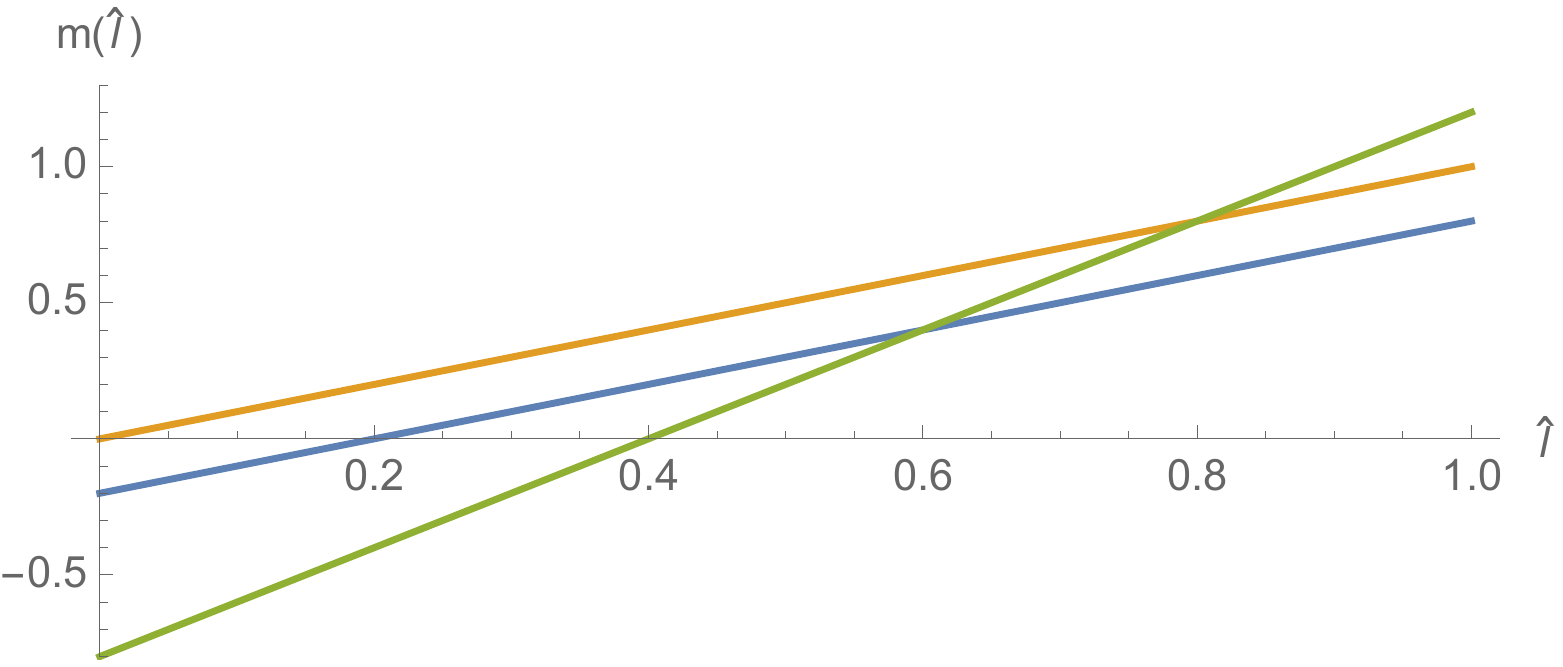}  
\caption{The graphs for the means of the Gaussian distributions in the distribution of telomere rules $r(l,\hat{l})$ are plotted. In Example 1  $m(\hat{l}) = \hat{l} - 0.2$ (blue). In Examples 2 and 3 $m(\hat{l})=1+2(\hat{l}-0.9)$ (green). The orange line is $m(\hat{l}) = \hat{l}$.}
\label{Figure5}
\end{figure}
\begin{figure}[h!]
\includegraphics[width=5in,height=3in]{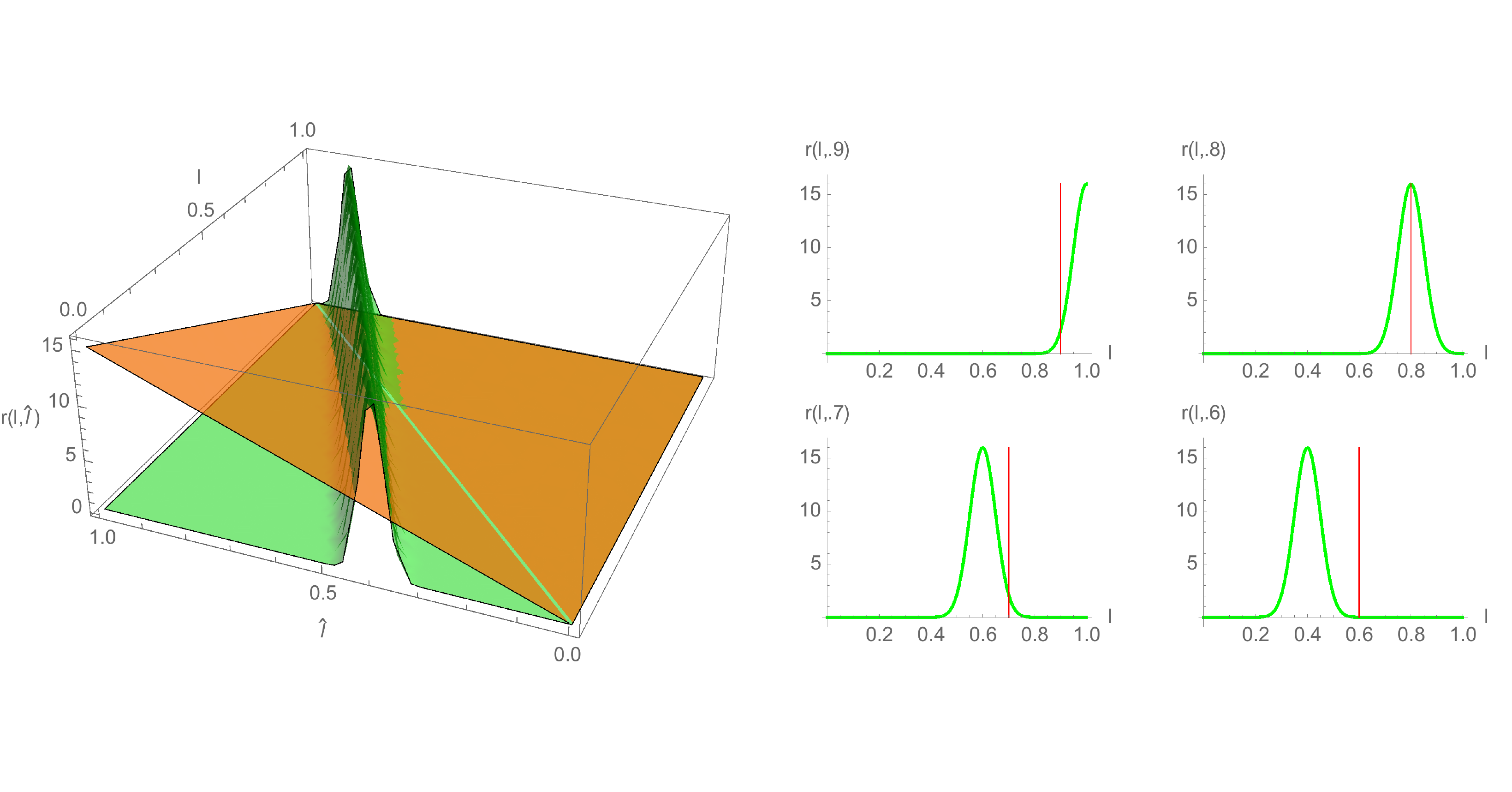}  
\caption{On the left panel the graph of $r(l,\hat{l})$ in Example 2 is plotted in green. The orange surface is  the graph of $10 \max\{\hat{l} - l,0\}$. On the right panel the slices of the graph of $r(l,\hat{l})$ for Example 2 at the values $\hat{l} = 0.9, 0.8, 0.7, 0.6$ are plotted. The telomere lengths of daughter cells from longer length mother cells may be greater than the mother cells. The telomere lengths of daughter cell from shorter length mother cells are all shorter than the mother cells telomere lengths.}
\label{Figure6}
\end{figure}

\begin{figure}[h!]
\includegraphics[width=4.8in,height=2.9in]{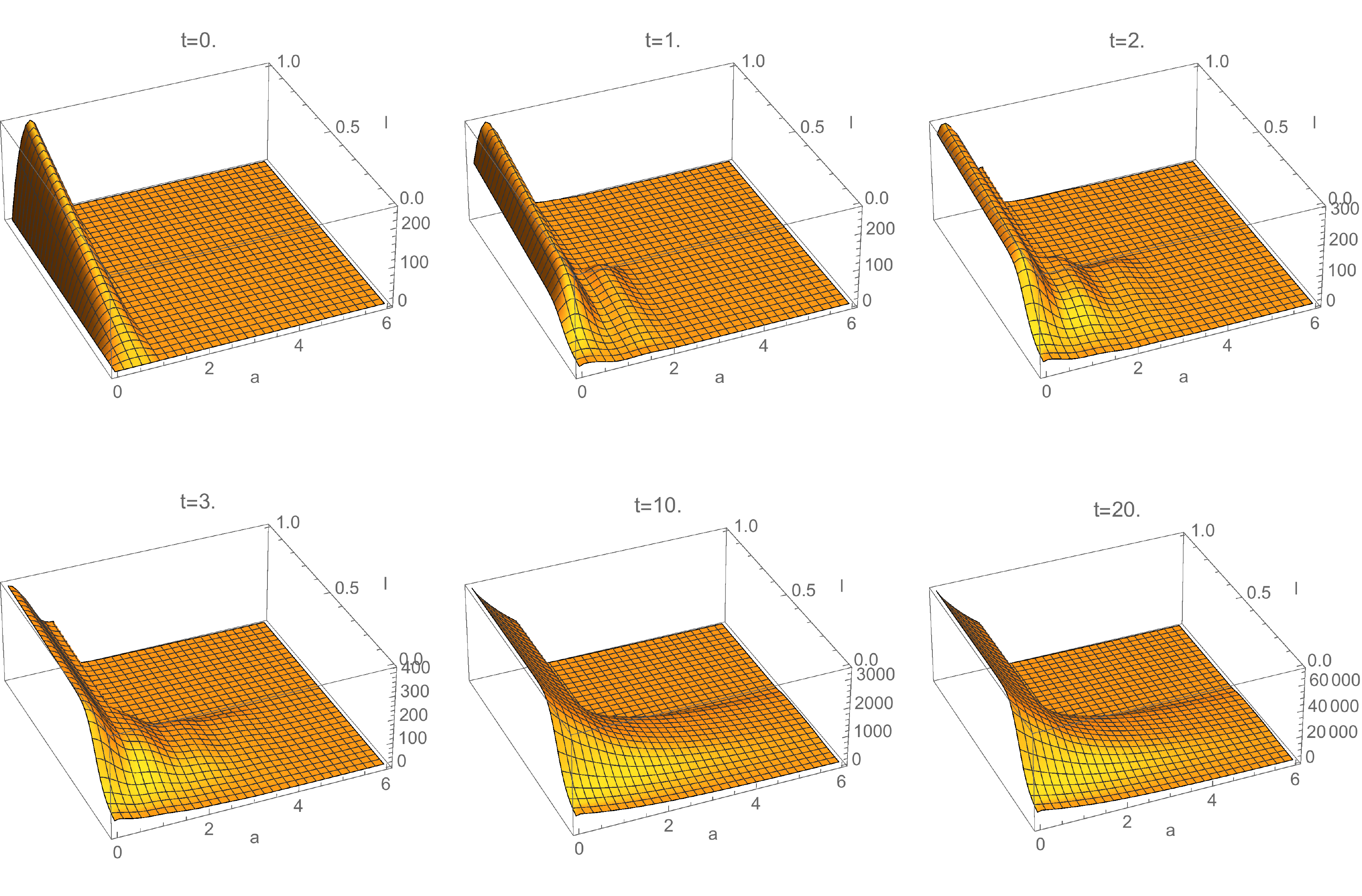}
\caption{The cell population densities $p(a,l,t)$ for Example 2 for the values $t = 0, 0.2, 0.5, 1, 1.5, 2, 10, 20$ are plotted. The population stabilizes with respect to age and telomere length even as  the total population size grows exponentially.}
\label{Figure8new}
\end{figure}

\begin{figure}[h!]
\centering
\includegraphics[width=11cm, height=4.8cm]{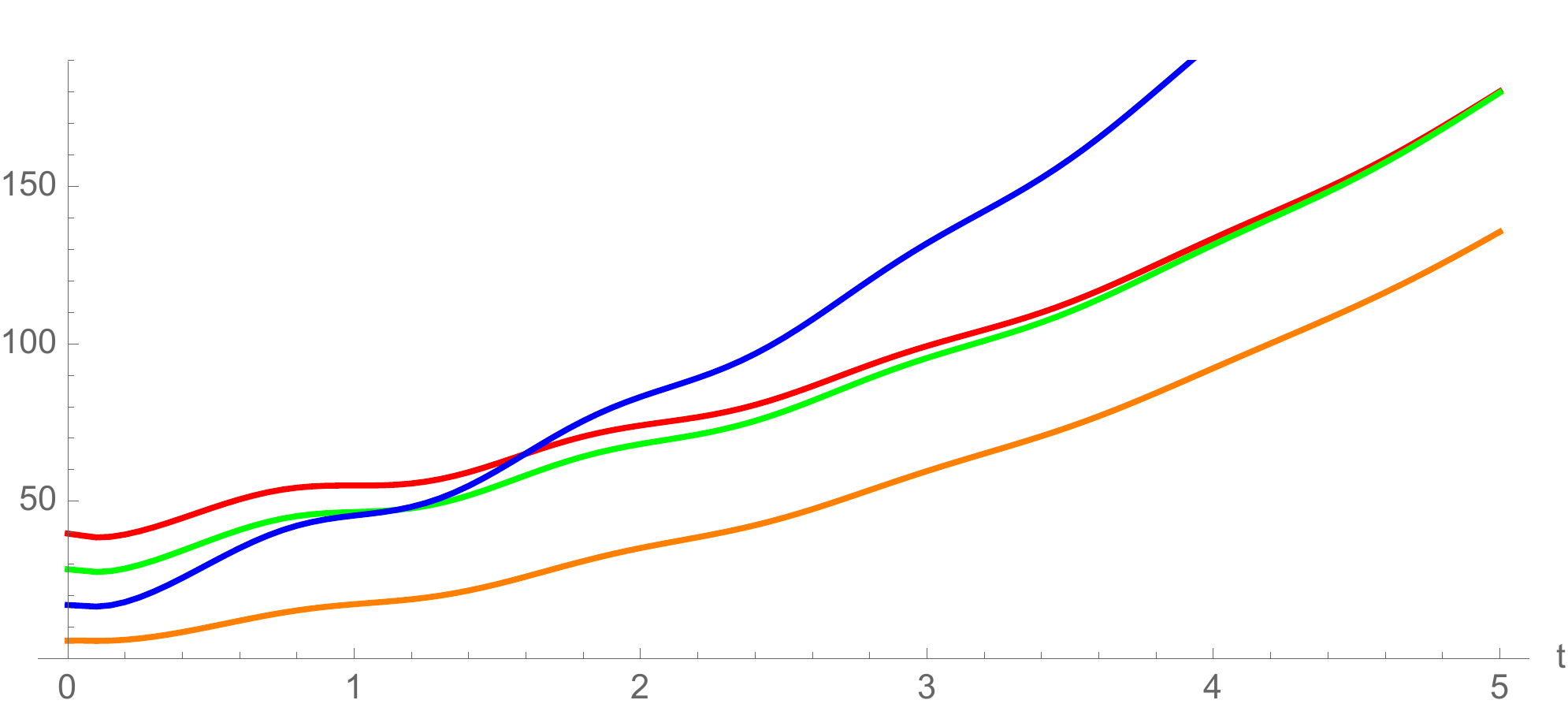}  
\caption{The time plot shows for Example 2 the subpopulations of cells as follows: red - telomere lengths between 0.75 and 1.0;  green - between 0.5 and 0.75; blue - between 0.25 and 0.5; orange - between 0.0 and 0.25;
all growing exponentially as time advances. The total population of cells  (not plotted here) is the sum of these four subpopulations.}
\label{Figure9new}
\end{figure}

\begin{figure}[h!]
\centering
\includegraphics[width=4.8in,height=2.9in]{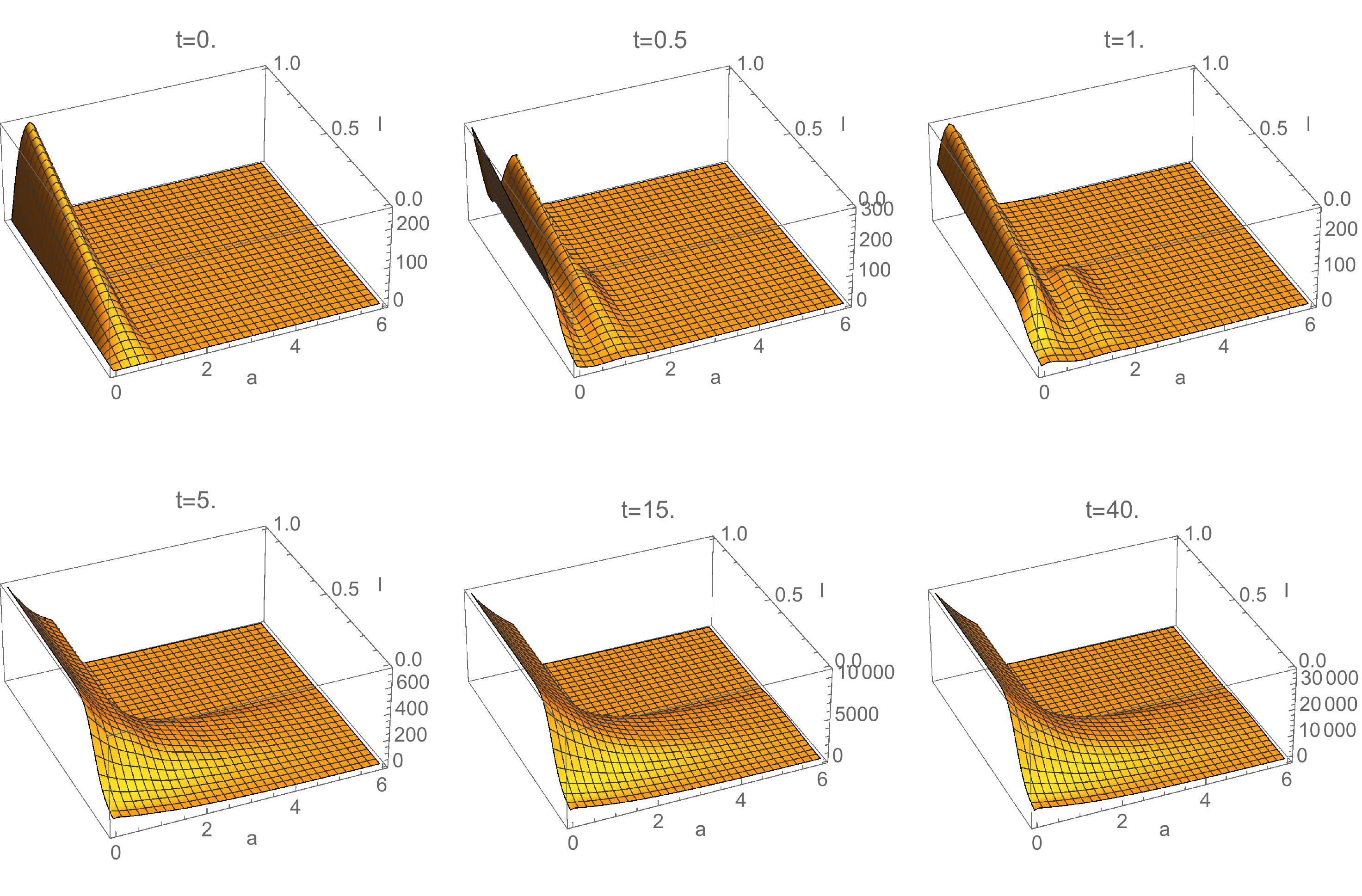}
\caption{The cell population densities $p(a,l,t)$ for Example 3 for the time values $t = 0, 0.5, 1,  5, 40, 50$ are plotted. The population stabilizes with respect to age and telomere length as the total population size stabilizes.}
\label{Figure10new}
\end{figure}

\begin{figure}[h!]
\centering
\includegraphics[width=11cm, height=4.8cm]{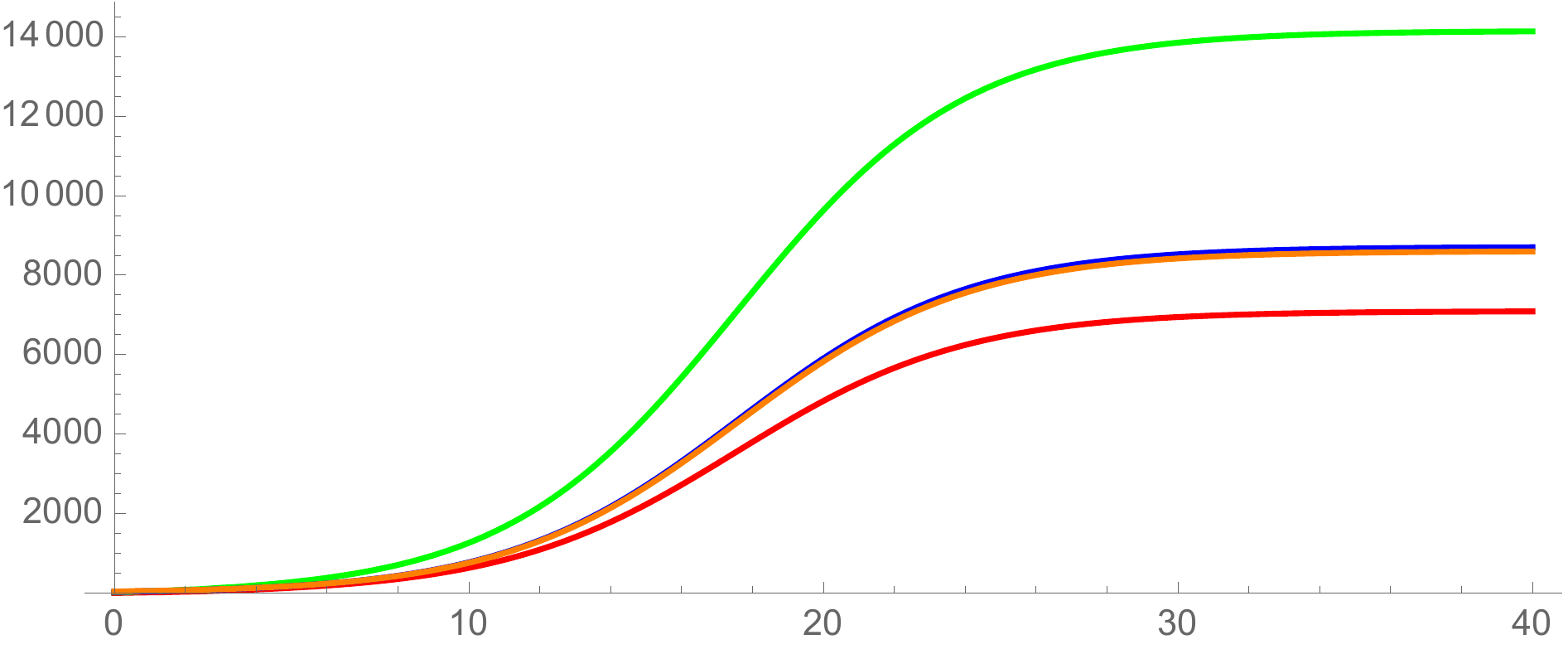}  
\caption{The time plot shows for Example 3 the subpopulations of cells as follows: orange - telomere lengths between 0.75 and 1.0;  blue - between 0.5 and 0.75; green - between 0.25 and 0.5; red - between 0.0 and 0.25; all converging to a steady state value. The total population of cells  (not plotted here) is the sum of these four subpopulations.}
\label{Figure11new}
\end{figure}

\clearpage

\section{Discussion}
In this work we have developed a mathematical formulation of cancer cell self-renewal for the clonal evolution model of tumour growth based on telomere restoration. The model allows for a continuum of telomere lengths, and thus contrasts to mathematical treatments of the cancer stem cell model, which incorporate many discrete telomere length classes \cite{Kap}. 
The cancer stem cell model formulates a hierarchal array of length classes with self-renewing cells in one longest telomere class. The clonal evolution model allows multiple classes of telomere length cells to have self-renewal capacity, corresponding to clonal structuring.

Our model is thus more tractable for analysis and simulations, which we have provided here. In particular, in this work we focused on the effect of telomere restoring capacity of cancer cells. In particular we showed that the asymptotic behaviour of the linear model is determined by the spectral radius of an integral operator. We then obtained estimates for the spectral radius of this integral operator. In Section 4 we extended our model by incorporating a competition induced nonlinearity in the mortality of cells. We treated the existence and stability of steady states of the the nonlinear model by using some well-known results from the theory of positive operators. Finally, in Section 5, we presented a number of examples and the results of numerical simulations, both for the linear and nonlinear model. The simulations highlight the dynamic behaviour of the model and underpin the analytical results obtained. 

Naturally, many issues remain in the mathematical investigation of the clonal evolution model of tumour growth. Important questions which can be addressed in the framework of a mathematical model include the following.  
\begin{itemize}
\item How do sequential mutations enter into the model formulation? 
\item How can quiescent cells be incorporated into the model? 
\item How can spatial heterogeneity be formulated in the equations? 
\item How can more complex nonlinearities be incorporated?
\item How can the model be implemented with actual experimental data? 
\end{itemize}
These issues and the development of a  general mathematical framework for analysing physiologically structured models with additional distributed structuring variables remain important subjects forf further research.

\subsection*{Acknowledgment}
We thank the Royal Society for financial support.


\end{document}